\documentclass[a4paper,11pt]{article}

\usepackage{amsmath,amssymb,enumerate,xspace,amsbsy,amsfonts,graphics,epsfig,float,mathrsfs,amsthm,xcolor,latexsym}
\usepackage[T1]{fontenc}
\usepackage[utf8]{inputenc}
\usepackage{graphicx}

\usepackage[bookmarks]{hyperref}

\newcommand{\tr}{\mathop{\mathrm{tr}}\nolimits}
\newcommand{\GL}{\mathop{\mathrm{GL}}\nolimits}

\newcommand{\FQ}{\mathfrak {F}(Q)}

\renewcommand{\v}[1]{\mathbf{#1}}

\renewcommand{\c}[1]{\textsf{#1}}

\newcommand{\pd}[2]{\frac{\partial #1}{\partial #2}}

\newcommand{\pddd}[3]{\frac{\partial^2 #1}{\partial {#2} \partial {#3}}}
\newcommand{\sd}[2]{\frac{\textrm{d} #1}{\textrm{d} #2}}

\newcommand{\be}[1]{\begin{equation}\label{#1}}
\newcommand{\ee}{\end{equation}}
\newcommand{\bes}[1]{\begin{equation}\label{#1} \begin{split}}
\newcommand{\ees}{\end{split} \end{equation}}
\newcommand{\ben}[1]{\begin{eqnarray}\label{#1}}

\newcommand{\re}[1]{~(\ref{#1})}

\newcommand{\bma}{\begin{pmatrix}}
\newcommand{\ema}{\end{pmatrix}}

\renewcommand{\bar}[1]{\overline{#1}}

\newcommand{\h}{\mathcal {H}}

\newtheorem{theo}{Theorem}
\newtheorem{prop}[theo]{Proposition}
\newtheorem{proposition}[theo]{Proposition}

\newtheorem{lem}[theo]{Lemma}
\theoremstyle{definition} \newtheorem{defi}{Definition}
\newtheorem{rem}{Remark}

\begin{document}
\thispagestyle{empty}
\title{The two momenta of an elastic rod:\\ a Hamiltonian picture on framed Lie groups}
\author{T. Lessinnes \\ \'Ecole Polytechnique de Bruxelles, Universit\'e Libre de Bruxelles, TIPs}

\maketitle

\hrule
\vskip 6pt
\noindent {\bf\small $\blacksquare$ Abstract\ } {\small
The equilibrium equations of elastic rods can be obtained by balancing forces
and moments, or by rendering a potential energy stationary. For complex filaments the energy route asks less of one's mechanical intuition. However, in the classical Hamiltonian picture, the components of the generalized momenta are postulated \emph{a priori} and their physical meaning
changes at the
whim of the coordinate chart. Here, we draw on two ideas from mathematical physics. First, the extended tangent bundle of the configuration
Lie group is framed by left-invariant vector fields. Second, we follow a one-dimensional reading of the Cartan--Lepage--Krupka
theory of variational forms: in this setting
the momenta are not postulated but forced --- they are the unique functions
for which the governing two-form is nonsingular with holonomic
characteristics. The Legendre transform becomes a linear change of
frame, and a Poisson structure on the extended tangent bundle follows.

For an isolated Cosserat rod, the momenta coincide, in every encoding of the
rotation group, with the material force and moment $(\c n,\c m)$ familiar from
rod theories. In the presence of interactions, two cases arise: interactions
that depend only on the configuration, such as gravity, leave this
identification intact; interactions that depend on the strains destroy it --- the conjugate momenta and the internal stresses part company. Because energies
add, the momenta decompose into the internal stresses and an interaction
contribution. Expressing the two factors of the Poisson bivector in different
frames --- one adapted to the internal stresses, the other to the conjugate
momenta --- then exposes the Hamiltonian flow directly in the internal variables $(q,\c n,\c m)$, where $q$ encodes the configuration.

Applied to a tendon-actuated rod, where the standard passage to the
Hamiltonian picture demands a nonlinear inversion, the construction delivers explicit 
equilibrium equations, the required inversion collapsing to a rank-one correction. The collapse is structural: a tendon
contributes its length to the energy, and the Hessian of a length has rank one.

}\vskip 6pt
\hrule

\tableofcontents

\section{Introduction}

The Hamiltonian picture has remained a minority practice in rod statics ---
pursued in depth by the Maddocks school~\cite{dilima95,kema97b} and by van der
Heijden and coworkers~\cite{va01,sthe14}, but rarely adopted elsewhere. The
reason may be practical. As soon as the rod interacts with its surroundings
through its strains, the passage to the Hamiltonian picture demands the
inversion of a nonlinear relation between strains and momenta, and the
resulting equations compare poorly with what a careful balance of forces and
moments delivers. This paper removes that obstruction. We show that, cast in
the right geometric setting, the conjugate momenta are not chosen but forced,
and that for a rod on its own they are precisely the material force and moment
of classical rod theory --- in every encoding of the rotations. When an
interaction couples to the strains this identification breaks, in a way the
framework makes exact and computable. Applied to a
tendon-actuated rod, this approach provides the equilibrium equations in closed form. The cost is
a change of viewpoint: frames instead of coordinates, and a variational
construction in which the momenta are derived rather than declared.

The theory describing the statics of thin elastic filaments, also known as rods, has a long history. Euler's study~\cite{eu44} of the equilibria of the elastica (a two-dimensional rod) followed by Kirchhoff's analogy~\cite{ki59} for the three-dimensional case and the Cosserat brothers' analysis~\cite{Cosserat1909} allowing for extensibility and shearability are early milestones. A modern exposition can be found in~\cite{an05}. One standard approach balances forces and moments applied to any segment of the rod. 
Another classical approach encodes the potential energy of the system as the sum of an elastic energy density $W$ (per unit reference length) stored in the rod and  the potential energy associated with its interaction with the world. This total energy is expressed as an integral to be performed along the length of the structure. Balance equations are then recovered \emph{via} the Euler-Lagrange equations associated with this functional.

The first approach is direct. However, for complex filaments it is often easier to postulate the  potential energy of the system. A few examples of complex filaments are 2-braids~\cite{sthe14}, $n$-plies~\cite{neva02}, birods~\cite{moma05,lemogo14} or $n$-rods~\cite{moulton2020morphoelastic}, but also constrained filaments such as a rod constrained to stay on a cylinder~\cite{va01,vachth02}. In all these cases, dissecting the balance of forces and moments between components is challenging. The problem is that going from the energy functional to the balance equation is a non-trivial procedure. And depending on the choice of coordinates for the configuration space $Q$, the ensuing Euler-Lagrange equations may not look like the well-studied Cosserat and Kirchhoff equations. 
One neat way of doing this for Cosserat rods is described in~\cite{ch03}. The trick is to encode the perturbation of curves in a way that respects the group structure of the rotating material frames.

Aiming for the generality that covers both simple Cosserat rods and complex filaments, a rod is defined here as a continuous one-parameter family of sections. Each of these sections can assume different configurations. Let $Q$ be the set of all such configurations; for all the rods mentioned above, $Q$ is a manifold. But there is more: $Q$ is a Lie group because configurations can be mapped to their associated  transformations from some reference configuration.
And more yet, Kirchhoff first stated that neglecting the intricate deformation within the section is the move that transforms the three-dimensional body of the rod that has small but finite thickness into a one-dimensional problem with each section having a finite number of degrees of freedom; six of them in his case. For complex rods, that number of degrees of freedom increases, but the main idea behind rod theories is that it stays finite. In short, the configuration set $Q$ is a \emph{finite-dimensional} Lie group.

Since we are dealing with a variational problem on curves, a curious physicist would undoubtedly wonder what could be gained by studying the Hamiltonian picture. After all, Kirchhoff himself started by noting that the balance equations for the statics of the unshearable and inextensible rod look a lot like the differential equations governing the dynamics of the heavy top; a system that has been extensively studied by Hamiltonian techniques. She should turn to~\cite{dilima95} where it was noted that for Cosserat rods, the Poisson bracket structure depends on the choices made to encode the $SO(3)$ factor in $Q=SE(3)$. One can read their push towards a description by quaternions as an attempt to get a better bivector within the Poisson structure. The fact that the (components of the) generalized momenta depend on the choice of coordinates is in general an inconvenience that all but ensures that the resulting balance equations will look quite foreign to the original Kirchhoff/Cosserat picture. Here we propose to avoid this by casting the problem in a way that respects left-invariance of vector fields on the configuration group $Q$. In a sense the idea is a generalization to arbitrary groups of the trick used in~\cite{ch03}. But it  is in fact much older than that.

Harnessing left-invariance for variational problems on Lie groups originates with Poincar\'e~\cite{poincare01}. The corresponding Hamiltonian structure  on Lie groups traces back to Lie himself. The subsequent
coadjoint-orbit reading was developed by Arnold, Kirillov, Kostant and
Souriau~\cite{ki62,ar66, ko70b,so70,ki04}. Its systematic development through
\emph{reduction} came later~\cite{maje99,cema01}. In the reduction approach, a
configuration space $Q$ carrying a symmetry group $G$ is quotiented, and the
dynamics descend to $(T^\star Q)/G$ --- the Lie--Poisson structure on
$\mathfrak g^\star$ when $Q=G$, and more generally the Hamilton--Poincar\'e
equations on a bundle whose geometry is encoded by a principal connection and its
curvature~\cite{cema03}. Loadings that depend on the configuration, and so break
the invariance, can be brought back into this picture by enlarging the group with
advected parameters~\cite{homa98}.

This reduction technique was brought to bear on rods in~\cite{kema97b} with a new spin on the Kirchhoff analogy: the Marsden and Ratiu school had treated the heavy top in some detail. So the approach was directly translated to rods. But there as well, when performing actual rod integration, specific choices of coordinates led to specific Poisson brackets and Hamiltonian pictures with different numbers of Casimirs, integrals and involutions (see Table 1 in~\cite{kema97b}).

We pursue a different tack with a different technical budget. We remain in the extended tangent bundle $\overline{TQ}:= TQ\times[0,L]$: the Cartesian product between the tangent bundle to $Q$ and the interval $[0,L]\subset\mathbb R$ of (reference) arc length values containing the rod. The Lie group structure is preserved by expanding vector fields and forms
in a left-invariant frame and its dual, but without quotienting, without choosing a
connection, and without introducing advected quantities to restore configuration
dependence: the configuration is simply never removed.

This casting of the variational problem disconnects the conjugate momenta and the Poisson structure from any choice of coordinates on $Q$. The bivector that encodes the Poisson structure on $\overline{TQ}$ can be pushed forward to any coordinate chart or group representation of $Q$ where we could then recover the various Poisson brackets studied in~\cite{kema97b} for Kirchhoff rods. Actually doing so is not our aim. Rather, we will display in section~\ref{sec-singlerod} how the theoretical framework directly yields Kirchhoff-like equations for rods regardless of the choices made to encode the $SO(3)$ factor. Working with frames brings us towards the Hamel equations~\cite{blma09} which encode the Euler-Lagrange equations in frames. 

Another advantage is that with frames, there is no need to invert the potentially non-linear relation between quasi-velocities and conjugate momenta. Their derivatives can be computed in whichever frame is expedient, since changing frame is a linear operation. Accordingly, the $p_i\, \xi^i$ term of the Hamiltonian need not be an explicit function of $(q,p)$.

A second choice is made here; it concerns how the Hamiltonian picture is introduced. In the standard argument the momenta $p_i:=\partial\mathcal L/\partial\dot q^i$ are \emph{introduced} by hand,
invertibility is verified, and the physical meaning of $p_i$ is then left to be discovered chart by chart. Furthermore, after the Legendre transform, what is $\int_\Gamma p_i dq^i -\mathcal H ds$? Is it still an energy you can \emph{minimize}? Instead, we follow another route: Lepage equivalent forms. These ideas were developed in a chain of works by Cartan~\cite{ca22}, Lepage~\cite{le36}, Krupka~\cite{kr73} and others (see~\cite{kr15} for a modern exposition) in the context of mathematical physics. Their targets were systems the state of which is encoded as fields defined over a space-time of arbitrary dimension $k$. For the statics of rods $k=1$ (because filaments are inherently one-dimensional) and it is possible to pose the problem along similar ideas without the need to refer to the full mathematical complexity required for the general treatment.
The distinction is not merely {\ae}sthetic; it has two concrete merits. First, the
momenta are \emph{derived} rather than declared. They are the unique functions rendering the governing form nonsingular with
holonomic characteristics~(see section~\ref{sec-lepagecartan}). In a left-invariant frame adapted to the 
material directors the functions it returns are exactly the material force and
moment $(\c n,\c m)$ for rods. Second, the functional retains its meaning throughout. The construction proves that the
form we extremalize \emph{is} the energy of the rod on holonomic curves.
This will matter in later work, where stable equilibria are sought not merely as
critical curves but as \emph{minimizers} of that energy.

Interactions that depend only on the configuration, such as gravity, leave
the identification of the momenta $p$ with the internal stresses $(\c n,\c m)$
intact; interactions that depend on the strains destroy it, and the momenta
$p$ and the internal stresses part company. The Lepage-equivalent approach
makes it clear that the $p_i$ are the canonical quantities. The tension then
resolves because energy is additive: the energy density splits as a sum where the first term is the internal energy of the rod
and the second encodes the interaction, and the momenta decompose
accordingly. We show how to keep the Hamiltonian structure developed above 
while expressing all quantities as functions of $\c n$ and $\c m$. Here again, a judicious choice of frame is key. The Poisson bracket and the
Hamiltonian function on $\overline{TQ}$ are left unchanged, but the bivector
of the bracket is expressed with its two factors in different frames: the
first in the frame associated with $(q,\c n,\c m)$, the second in the frame
associated with $(q,p)$. We end by showcasing the technique on the analysis
of a tendon-actuated soft robot.

The plan is as follows. Section~\ref{sec-cosserat} recalls the Cosserat rod,
fixes notation, and casts the configuration of a section as an element of a Lie
group $Q$ --- the one structural assumption the construction rests on.
Section~\ref{sec-theory} is the engine room: it builds the framed extended
tangent bundle $\overline{TQ}$, shows that the conjugate momenta are
\emph{forced} rather than chosen (Proposition~\ref{prop-pforced}), identifies the
resulting Poincar\'e--Cartan form as the energy of the rod, turns the Legendre
transform into a linear change of frame, and reads off a Poisson structure. It
closes with a step-by-step recipe (section~\ref{sec-steps}) that takes one
mechanically, group in hand, from an energy functional to the equilibrium equations.
The remaining sections put the recipe to work. Section~\ref{sec-singlerod} treats the case where the momenta $p$ align with the rod stresses.
The recipe recovers the classical Kirchhoff equations when it is applied to the familiar problem of the tip-loaded elastic rod. The same computation carried out in three encodings of
the rotation group (rotation matrices, unit quaternions, and Cayley vectors) illustrates that the answer is independent of the encoding. Section~\ref{sec-interact}
turns to strain-dependent interactions: we show how such a loading is handled by adapting the framing of each factor in the Poisson bivector. The internal
moment $\c m$ is carried through the bracket even though the conjugate momentum is
no longer $\c m$ itself. We illustrate the technique on a tendon-actuated rod,
where the required matrix inversion collapses to
a single rank-one correction and the equilibrium equations are deduced in explicit form.

\section{A starting point: Cosserat rods\label{sec-cosserat}} 

A Cosserat rod is a continuous, one-parameter collection of rigid bodies -- the sections of the rod. We therefore label each section by a material parameter $S\in[A,B]\subset\mathbb R$ that increases monotonically from one tip of the rod to the other. The position of each section is then encoded as a reference point $\v r(S)$ within the section and a material basis $(\v d_1(S),\,\v d_2(S),\, \v d_3(S))$ which is always chosen orthonormal and is locked with material directions of the section. The frame $\{\v r,\v d_1,\v d_2,\, \v d_3\}$ is called the material frame of the section $S$. Given a lab frame $\{ O, \v e_1,\,\v e_2,\, \v e_3\}$, we can therefore encode the state of a section by a translation which we also denote\footnote{There is a slight overload of notation between the position $\v r$ of the centreline and the translation $\v r$ from the origin of the lab frame to it. It is however benign and standard in the field.}  $\v r(S)$, and a rotation with rotation matrix $R(S)\in SO(3)$ from the lab basis to the material basis. 

The frame $\{\v r,\v d_i\}$ both translates and rotates as one moves along the rod. The translation rate $\v r'(S)$ and the rotation rate $\v u$ can both be expressed in the local basis as 
\begin{align} \label{basicgeom}
\v r'&=\c v^i\, \v d_i,&
\v u&=\c u^i\, \v d_i,&
R'&=R\, \c u^\times,
\end{align}
where once and for all, and in keeping with habits of the field~\cite{dilima95,goriely17}, both vectors and the column of their components in the lab frame are denoted as bold letters such as $\v u$ while the column  of their components in the local frame $\v d_i$ is denoted in sans-serif font such as $\c u$. The last equality in~\eqref{basicgeom} defines the column $\c u^i$ via $\c u^\times = R^T\, R'$ which is a skew matrix as can be seen by taking a derivative of $R^TR=$Id. We have also used the notation
\begin{align}
\label{defcross}
\c u^\times =\bma 0 & -\c u^3 &\c u^2 \\
\c u^3 & 0 & -\c u^1 \\ 
-\c u^2 & \c u^1 & 0
\ema,
\qquad\textrm{that is} \qquad 
\left(\c u^\times \right)^k_j &= \varepsilon^k_{ij} \c u^i,
\end{align}
with $\varepsilon^k_{ij}$ being fully antisymmetric with $\varepsilon^1_{23} =1$.  In particular, the last equation in~\eqref{basicgeom} is equivalent to $\v d_i'= \v u\times \v d_i$.

In the absence of loads of any kind, the rod assumes a reference shape encoded by $\c v = \widehat{\c v} := \bma 0 & 0 & 1\ema^T$ and $\c u = \widehat {\c u}$. The specific choice of $\widehat {\c v}$ expresses that without load, the centreline $\v r$ runs along the rod with $\v d_3$ tangent to it and that $S$ is the arc length parametrization in that reference shape. The reference curvature $\widehat {\c u}$ encodes for a rod that would be coiled in its reference shape. 

\subsection*{Forces and moments balance approach}

For any section $S^\star$, picture mentally splitting the rod into two parts: the material lying at $S>S^\star$ and that which is at $S\leq S^\star$. The first part applies a force $\v n$ and a moment of force $\v m$ with respect to $\v r(S^\star)$ on the second part. To complete the picture, we assume that the rod is under a density of force $\v f$ per unit $S$ and a density of moment $\v l$ per unit $S$ and with respect to $\v r(S)$.

Hence, in a static configuration, the balance  of forces and moments with respect to $\v r(S^\star)$  on the piece of the rod referenced by the interval $S\in[S^\star ,S^\star + \varepsilon]$ reads
\begin{align}
&\v n(S^\star+ \varepsilon) -  \v n(S^\star) + \int_{S^\star}^{S^\star + \varepsilon} \v f(S)\, dS= 0,\\
&\v m(S^\star+ \varepsilon) -  \v m(S^\star) + \left[ \v r(S^\star+ \varepsilon) - \v r(S^\star)\right] \times \v n(S^\star+\varepsilon)\\
&\qquad\qquad\qquad+  \int_{S^\star}^{S^\star + \varepsilon}\left[\v  r(S) - \v r(S^\star)\right] \times \v f(S) \, dS+ \int_{S^\star}^{S^\star + \varepsilon} \v l(S) \, dS= 0.
\end{align}
Dividing by $\varepsilon$ and then taking $\varepsilon \to 0$, yields the Kirchhoff equations~\cite{ki59} for rod statics:
\begin{align}\label{cosseratbalance}
\v n'(S^\star) + \v f(S^\star) &= 0,& 
\v m'(S^\star) +\v r'(S^\star) \times \v n(S^\star) + \v l(S^\star) &= 0.
\end{align}

To close this set of equations, we need to express constitutive laws that fix the force $\v n$ and moment $\v m$ as functions of the shape of the rod. Most often, the relation between the momenta $\c m$ and the strains $\c u-\widehat{\c u}$ is assumed linear: 
\begin{align}
\c m_i = K_{ij}\, (\c u-\widehat{\c u})^j,
\end{align}
where $K$ is a symmetric and positive-definite constitutive matrix. Throughout this text, matrix elements with their indices on the same level are listed as row index first followed by column index; matrix elements with both upper and lower indices (like in $\mathcal G^i_j$ in section~\ref{sec-interact}) have row index up. The material frame is systematically chosen so that $K$ is diagonal~\cite{an05}. 

As for the force $\c n$, many practical applications assume a Kirchhoff rod. That is, the rod is unshearable (the first two components of $\c v$ always vanish) and inextensible ($\c v^3 = 1$; the reference arc length parametrization remains an arc length parametrization in all configurations). These three constraints on $\c v$ close the system of equations so that no further constitutive laws should be enforced on $\v n$. 

In general, the balance laws~\eqref{cosseratbalance} may also be cast in the material frame. For a general vector $\v w$, we have $\v w' = \left(\c w^i\,\v d_i\right)' = (\c w' +\c u\times \c w)^i\,\v d_i$. The full set of equations for the Kirchhoff rod is therefore 
\begin{align}\label{kirchhoff}
&\c n' + \c u\times \c n + \c f = 0, &
&\c m' +\c u\times \c m + \widehat{\c v} \times \c n +\c l= 0,& 
 &\v r' = \v d_3, &
& \v d_i' = \v u\times \v d_i &
 &\c m = K(\c u-\widehat {\c u}).
\end{align}

Finally, there are applications for which we do away with the inextensibility, and more rarely, the unshearability assumptions. In those cases, a constitutive law for the force is specified, and it too is often assumed linear: $\c n = A\,(\c v -\widehat{\c v}).$ Once again the stiffness matrix $A$ is assumed to be symmetric and positive definite. In many practical cases, we may have both $A$ and $K$ diagonal at the same time. The unshearability condition amounts to taking the limits to infinity of the first two diagonal entries of $A$ and the inextensibility condition is recovered by taking the limit to infinity of the third diagonal entry of $A$.

\subsection*{The variational approach}

The Cosserat rods described above were modelled by imposing the principle of conservation of momentum and assuming particular constitutive laws. It is possible to deduce the same equations~\eqref{kirchhoff} by defining equilibria as critical curves of a potential energy functional. To do so, one posits an energy density function $W\left(\c v-\widehat{\c v},\,\c u-\widehat{\c u}\right)$ that encodes the elastic energy density per unit $S$ stored in the rod when it is assuming the shape encoded by $\c v$ and $\c u$. If the loads $\v f $ and $\v l$ are themselves conservative, there exists a potential energy density $\psi(\v r,R)$ associated with them. The potential energy functional of the system is then
\begin{equation}\label{introenergyfunc}
  E = \int_0^L W(\c v-\widehat{\c v},\c u -\widehat{\c u}) + \psi(\v r,R)\, dS.
\end{equation}
Equilibria are defined as the solutions of the associated Euler-Lagrange equations. Chapter 6 of~\cite{ch03} expounds a  crafty technique for taking a variation of~\eqref{introenergyfunc} that is well worth knowing for any rod practitioner. Using it regularly was an inspiration for a central idea of the present document: we ought to be working with frames (more on that in section~\ref{sec-frame}).

\subsection*{Generalised filaments}

For Cosserat and Kirchhoff rods described above the configuration of a section is encoded by a translation and a rotation from some reference frame. That is the configurations can be encoded by the Lie group\footnote{Where it is usual to overload $SE(3)$ to mean both the abstract group of orientation-preserving isometries and its faithful $\GL(4,\mathbb{R})$ representation.} $Q=SE(3)$ consisting of orientation preserving isometries of the Euclidean three-dimensional space. 

Complex filaments are constituted of bundles of smaller rods attached to each other in various ways. Accordingly, generalized filaments are continuous one-parameter collections of objects whose configurations can be encoded by elements of a finite-dimensional Lie group. In the case of bundles of $k$ subfilaments, we can always do this by considering $SE(3)^k$. Sometimes a smaller group will do. In the Bristol ladder~\cite{pila13} two flexible flanges are connected by rigid spokes; a working configuration group is $SO(3)\times SO(2)$. The first factor handles one of the flanges and the second may encode the rotation of the sister flange with respect to the first one.

The group of all possible transformations encodes geometric information relevant to a specific class of problems that share that group. And that is true before constitutive laws are provided. Even though it is always possible to choose coordinates on (patches of) any such group and simply work in $\mathbb R^n$, precious information is lost in the process that will typically need to be reintroduced later by clever tricks. In what follows, this information is preserved by expanding vector fields (and 1-forms)  on a frame that is built upon  left-invariant vector fields on $Q$. 

We develop the general picture in section~\ref{sec-theory}. Table~\ref{tab-translate} is there to keep the connection between the rods described here and the machinery to come. The translation is justified in section~\ref{sec-singlerod}.

\begin{table}
  \begin{center}
\begin{tabular}{|c|c|c|}
\hline
quantity & geometric notation & rod notation \\
\hline
quasi-velocities & $\xi^i$ & $(\c v, \c u)$\\
generalized momenta & $p_i$ & $(\c n, \c m)$\\
structure constants &$p_i \, c^i_{jk}$ &  $(-\c n^\times,-\c m^\times)$\\
Lagrangian function& $\mathcal L$ & energy-density: $W$ + interaction terms.\\
\hline
\end{tabular}
\caption{Correspondence table between geometric objects and quantities pertinent for rod theories.\label{tab-translate}}
\end{center}
\end{table}

\section{Framed Hamiltonian treatment of variational problems for rods\label{sec-theory}}

Before the construction begins, we fix the stage. Sections~\ref{sec-hamcla}
and~\ref{sec-geomvar} recall classical material: the coordinate route to the
Hamiltonian picture, and the characterization of critical curves of a 1-form
as the vortex lines of its differential~\cite{ar00}. 
Sections~\ref{sec-extTan} and~\ref{sec-frame} then assemble the framed
extended tangent bundle from three standard ingredients: the left
trivialization of the tangent bundle of a Lie group~\cite{mara99}, the
quasi-velocities of Hamel's formalism as coordinates on the
fibres~\cite{blma09}, and the contact forms of the jet
space~\cite{kr15,gri83}. The assembly is elementary but must be proved
somewhere; the framing statements are collected in
appendix~\ref{app-framing}. The construction proper begins in
section~\ref{sec-lepagecartan}.

\subsection{Calculus of variation: the classical Hamiltonian picture\label{sec-hamcla}} 

Start with a functional of the form
\be{varClassical}
{\mathcal E}[q]=\int_a^b \mathcal L\left(q^1(s),\cdots,q^n(s),\dot q^1(s),\cdots, \dot q^n(s),\, s\right) ~ds,
\ee
where $(q^1,\cdots,q^n)\in\mathbb R^n$ and $\dot {()}$ denotes derivative with respect to $s$.

The aim of the game is then to find a critical function $q:[a,b]\subset\mathbb R\to \mathbb R^n$ for~\eqref{varClassical}. That is, we look for those special curves $q$ such that when moved, $\mathcal E[q]$ is left unchanged at first order. Technically, we ask that $\delta {\mathcal E}|_q[\tau]:=\sd{}{\varepsilon} \mathcal E[q+\varepsilon\,\tau]|_{\varepsilon =0}$ vanishes for all acceptable perturbation $\tau(s)=\big(\tau^1(s),\cdots,\tau^n(s)\big)$. For fixed boundaries, that is equivalent to the Euler-Lagrange equations
\be{ELclassical}
\forall i:\qquad\qquad \pd{\mathcal L}{q^i} - \sd{}{s} \pd{\mathcal L}{\dot q^i} =0.
\ee

For free boundaries, $\delta {\mathcal E}=0$ is equivalent to both\re{ELclassical} and 
\begin{align}\label{freeBC}
\left .\pd{\mathcal L}{\dot q^i}\right|_{\big(q(a),\dot q(a),a\big)} =& 0,&\left .\pd{\mathcal L}{\dot q^i}\right|_{\big(q(b),\dot q(b),b\big)} =& 0.
\end{align}

Classically, to build the Hamiltonian form of the Euler-Lagrange equations, we first define the generalized momenta 
\begin{align}\label{ClassicalMomenta}
p_i = \partial_{\dot q^i}\mathcal L.
\end{align}
After checking the Legendre condition that the matrix $\partial_{\dot q^i\dot q^j} \mathcal L$ is positive definite, we can then in principle express the velocities $\dot q^i(q,p)$ as functions of the positions and momenta.

We then perform a Legendre transform defining 
$$H=p_i\, \dot q^i(q,p)- \mathcal L\bigl(q,\dot q(q,p),s\bigr),$$
and observe that the system~\eqref{ELclassical}  of $n$ second-order differential equations is equivalent to the following system of $2n$ first-order differential equations
\begin{align}
\dot q^i&=\partial_{p_i} H,&
\dot p^i&=-\partial_{q^i} H.
\end{align}

Geometers note that $(q^1,\cdots,q^n)$ are a system of coordinates on some manifold $Q$ and casts $H$ as a function on the (extended) cotangent bundle of $Q$ where one can define the canonical symplectic form $\omega^2=dp_i\wedge dq^i$ and a Poisson bracket. Both objects yield a rich structure. In particular the invariants of the Hamiltonian flow are easily studied and connected via N\oe{}ther's theorems with the continuous one-parameter groups of symmetries of the system. 

Note that because this situation often arises in the case of dynamical problems in which $\mathcal L$ is the Lagrangian of some system and $\mathcal E$ its action integral, it is standard to only look for extremal curves and not for  minimizing curves of $\mathcal E$. However, we intend to study the elastic energy stored in a rod. In that case, equilibria are defined as extremal curves of this elastic energy but \emph{stable} equilibria are local \emph{minimizers} of the energy. We would therefore like to have tools which are also adapted to the study of the second variation of $\mathcal E$.

\subsection{A geometric variational problem \label{sec-geomvar}} 

A different problem mentioned for example in~\cite{ar00} consists in studying the integral of a 1-form $\omega$ on some manifold $M$ over a certain set of possible curves $\gamma$ with images in $M$.
\begin{align}
\label{funGeom}
\mathscr E =\int_\gamma \omega
\end{align}

We could then ask to find a curve that locally minimizes $\mathscr E$. To encode the idea of perturbing $\gamma$, we can then ask how $\mathscr E$ would change if we let $\gamma$ follow the flow associated with some vector field $W$ defined on $M$. If it is so that $\gamma$ is injective (no self-intersection) and $\mathscr E$ would increase for all such flows, then $\gamma$ is indeed a local minimizer of $\mathscr E$.

The first variation of~\eqref{funGeom} is then defined as the following operator on vector fields defined on an open neighbourhood of $\gamma$: 
\begin{align*}
\delta \mathscr E[W]=\left.\sd{}{\varepsilon} \int_{\phi_W(\varepsilon,\gamma)} \omega \right|_{\varepsilon=0},
\end{align*}
where $\phi_W$ denote the flow associated with the vector field $W$ (an introduction can be found in~\cite{Lee12}).

The Stokes theorem applied to forms shows (see appendix~\ref{app-varform}) that 
\begin{align}
\label{varGeom}
\delta \mathscr E[W] =\left[\iota_W \omega\right]_a^b +\int_\gamma  \iota_W d\omega
\end{align}
where $\iota_W$ is the contraction with $W$.

The first term in~\eqref{varGeom} plays the same role as the boundary terms appearing in the integration by part that yields the Euler-Lagrange equations in the calculus version of the argument. When minimizing with fixed boundaries, we ask that $W$ vanishes at $\gamma(a)$ and $\gamma(b)$ -- that is the end points do not flow. In that case the boundary term in~\eqref{varGeom} vanishes identically. When minimizing with free boundaries, the curve $\gamma$ must start and end at points of $M$ on which the form $\omega$ vanishes.

To study whether the second term of~\eqref{varGeom} identically vanishes, we focus on the particular case for which the following two conditions hold: $(i)$ the manifold $M$ is odd-dimensional and $(ii)$ the form $d\omega$ is non-degenerate (i.e. its null space is of dimension 1). When both those conditions hold the null space of a non-degenerate two-forms at a particular point of $M$ defines a one-dimensional vector subspace of the tangent space. The collection of those directions at all points of $M$ define a slope field on $M$. Curves which are tangent to that slope field are called vortex lines of the 2-form (see~\cite{ar00} also summarized in appendix~\ref{app-vortex}). One can show assuming both conditions $(i)$ and $(ii)$, that $\delta \mathscr E = 0$ iff the curve $\gamma$ is along the null direction of $d\omega$ (see appendix~\ref{app-vortex}); that is $\gamma$ is a vortex line of $d\omega$. This yields an encoding of $\gamma$ based on the integration of a system of first order equations (up to a parametrization). That is to say $\gamma$ is a flow curve of a vector field that is defined algebraically which is reminiscent of the Hamiltonian form of the Euler-Lagrange equations.

\subsection{The extended tangent-bundle $\overline{TQ}$\label{sec-extTan}}

Two problems arise when trying to bridge the gap between the last two sections.  First, 
expression~\eqref{varClassical} is manifestly not the integral of a 1-form on~$Q$. This will be addressed here by casting $\mathcal E$ as an integral along a curve in the extended tangent bundle $\overline{TQ}:=TQ\times[a,b]$ instead of an integral along a curve in $Q$. In that bigger space, $ds$ is a form and $\mathcal L$ is a function. However, the second problem arises: the differential  of $\mathcal L ds$ is a (badly) singular two-form so that the argument about vortex lines of section~\ref{sec-geomvar} and appendix~\ref{app-vortex} does not apply. That will be addressed in section~\ref{sec-lepagecartan}.

Starting with\re{varClassical}, assume that the $q^i$ form a set of coordinates on some manifold $Q$ of dimension $n$. Then the functions $q^i:[a,b]\subset \mathbb R\to \mathbb R$ appearing in~\eqref{varClassical} encode a curve $\gamma$ with image in $Q$:
\begin{align*}
\gamma:[a,b]\subset\mathbb R\to Q: s\mapsto \gamma(s),
\end{align*}
where $\gamma(s)$ has coordinates $\left(q^1(s),\cdots,q^n(s)\right)$. 

\begin{defi}  Let $TQ$ be the tangent bundle of a manifold~$Q$, and $[a,b]\subset \mathbb R$ a real interval. We define the \textbf{extended space} $\overline Q=Q\times \mathbb [a,b]$ and the \textbf{extended tangent-bundle}\footnote{Readers from geometry will recognize $\overline{TQ}$ as the jet space $J^1([a,b],Q)$.}\footnote{Note that $\overline{TQ}\neq T\overline Q$. The former space is a manifold of dimension $2 n+1$ and the latter (which we will not use here) is a manifold of dimension $2n+2$ where $n = \textrm{dim } Q$.} $\overline{TQ}=TQ \times [a,b]$.
\end{defi}

 The space $\overline{TQ}$ therefore consists of elements noted $(q,X,s)$ which contain three pieces of information: a point $q\in Q$, a vector $X\in T_qQ$ tangent to $Q$ at $q$ and a real number $s\in[a,b]$. This space also forms a vector bundle of rank $n$ over $\overline Q$ with the associated submersion $\pi:\overline{TQ}\mapsto\overline Q:(q,X,s)\mapsto (q,s)$.  The fibre $\pi^{-1}(q,s)$ of $\overline{TQ}$ over $(q,s)$ is  the tangent space to $Q$ at $q$: $T_qQ$. We will also make use of the submersion $\sigma:\overline{TQ}\to Q:(q,X,s)\mapsto q$ to $Q$ and its push-forward\footnote{That is an operator on vectors tangent to $\overline{TQ}$ that yield the projected vector tangent to $Q$. It is a 1-form with values in the tangent space of $Q$ at $q$. Sometimes one also notes $d\sigma = \sigma_\star$.}~$\sigma_\star$. That is if $C(t) = (q(t),X(t),s(t))$ is a general curve on $\overline {TQ}$, we simply have $\sigma_\star(C'(t)) = q'(t)\in T_qQ.$ Finally, recall that in general if $\zeta:A\to B$ is a submersion between manifold, we say that a vector $X_a$ tangent to $A$ is \textbf{vertical} for $\zeta$ whenever $\zeta_\star(X_a)= 0$.
 
 \begin{defi} \label{def-lift}Given a smooth curve $\gamma:[a,b]\to Q: s\mapsto \gamma(s)$, the lifted curve $L[\gamma]$ is a curve with image in $\overline{TQ}$ defined by 
\begin{align} \label{deflift}
L[\gamma]:[a,b]\to \overline{TQ}: s \mapsto \big (\gamma(s),\, \gamma'(s),\, s\big).\end{align}
\end{defi}
 \vspace{0mm}
 
 Because the function $\mathcal L$ appearing in~\eqref{varClassical} depends on the $\dot q^i$, that is on the tangent vector to $\gamma$, it is not a function on $Q =\mathbb R^n$. It is however a function on the extended tangent-bundle $\overline{TQ}$. We can therefore re-write\re{varClassical} as
 \be{varClassicalGeom}
 {\mathcal E}:=\int_a^b \mathcal L(q^i,\dot q^i,s) ds = \int_{L[\gamma]} \mathcal L \, ds =:\mathscr E\bigl[L[\gamma]\bigr],
 \ee
 which is indeed the integral of a 1-form on $\overline{TQ}$. In what follows, the manifold $\overline{TQ}$ constructed above will play the role of the manifold $M$ of section~\ref{sec-geomvar}.
 
However, locally minimizing~\eqref{varClassical} over neighbouring curves in $Q$ is not equivalent to locally minimizing $\mathscr E[C]$ in~\eqref{varClassicalGeom} over neighbouring curves in $\overline{TQ}$. We need to minimize over those curves in $\overline{TQ}$ that are (a~reparametrization of) a lift of a curve in $Q$. The following question therefore needs an answer. Given a general curve $C$ with image in $\overline{TQ}$, does a curve of $\gamma\subset Q$ exists such that it lifts to $C$?

\begin{defi} A curve $C :t\in [A,B] \mapsto C(t)\in\bar{TQ}$ is \textbf{holonomic} iff there exists a curve $c : \tilde s\in [a,b] \mapsto c(\tilde s)\in Q$ and a monotonic and smooth mapping $s:[A,B]\subset\mathbb R\to [a,b]\subset\mathbb R$ such that 
\be{geomlift}
C (t) =\left(\left .c\right|_{s(t)}, \, \left. c'\right|_{s(t)} ,\, s(t) \right).
\ee
\end{defi}

The following lemma handles the characterization whether a specific curve $C$ is holonomic.
\begin{lem}\label{lem-integrable}
A smooth curve $C :t\in [A,B] \mapsto C(t)\in\bar{TQ}$ is \emph{holonomic}  \emph{iff} 
\begin{align}
\forall t\in[A,B]: (\sigma_\star -X ds) (C'(t))=0 \textrm{ and } ds(C'(t))\neq 0.
\end{align}
\end{lem}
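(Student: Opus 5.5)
Write the curve as $C(t)=(q(t),X(t),s(t))$, with $X(t)\in T_{q(t)}Q$. By the definition of $\sigma_\star$, $\sigma_\star(C'(t))=q'(t)$ and $ds(C'(t))=s'(t)$. The condition $(\sigma_\star - X\,ds)(C'(t))=0$ is therefore the equality of tangent vectors at $q(t)$
\begin{align*}
q'(t)=s'(t)\,X(t),
\end{align*}
and the second condition reads $s'(t)\neq 0$. The lemma thus reduces to a statement about a pair of curves, and I will prove the two implications separately.

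For the direct implication, assume $C$ is holonomic, so $C(t)=\bigl(c(s(t)),c'(s(t)),s(t)\bigr)$ with $s$ smooth and monotonic. Reading off components gives $q=c\circ s$, $X=c'\circ s$, and the chain rule yields $q'(t)=s'(t)\,c'(s(t))=s'(t)X(t)$, which is the first condition. The second condition needs $s'(t)\neq0$ for all $t$. Plain monotonicity allows $s'$ to vanish at isolated points, so I read ``monotonic'' in the definition as strictly monotonic with smooth inverse, i.e.\ $s$ is a reparametrization. This is the intended meaning, since otherwise the lemma fails (take $s(t)=t^3$). I will state this reading explicitly at the start of the proof.

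For the converse, assume $q'(t)=s'(t)X(t)$ and $s'(t)\neq0$ on $[A,B]$. By continuity $s'$ has constant sign, so $s$ is a strictly monotonic diffeomorphism onto its image $[a,b]$ with smooth inverse $t(\tilde s)$. Define $c:=q\circ t:[a,b]\to Q$. Then $s(t)$ is given, and $c(s(t))=q(t)$. The chain rule gives $c'(\tilde s)=q'(t(\tilde s))\,t'(\tilde s)=q'(t)/s'(t)=X(t)$, evaluated at $t=t(\tilde s)$. Hence $C(t)=\bigl(c(s(t)),c'(s(t)),s(t)\bigr)$, which is \eqref{geomlift}.

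The only delicate point is the interval bookkeeping. The image of $s$ must be read as the interval over which $c$ is defined, which may be a subinterval of the base interval of $\overline{TQ}$. The computations are local and coordinate-free, because equalities between tangent vectors at $q(t)$ need no chart. The main obstacle is therefore the interpretation of ``monotonic'' noted above, not any calculation.
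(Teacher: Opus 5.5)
Your proof is correct and follows the paper's argument step for step: the chain rule gives necessity, and for sufficiency you invert $s$, whose derivative has constant sign, set $c=q\circ s^{-1}$, and check that $c'(s(t))=q'(t)/s'(t)=X(t)$. Your reading of ``monotonic'' as ``diffeomorphism onto its image'' is needed, not pedantic: the paper's necessity step asserts $s'(t)\neq0$ merely because $s$ is monotonic, which $s(t)=t^3$ refutes, so under the literal definition the ``only if'' direction is false.
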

\begin{proof} 
The condition is necessary because by definition, a holonomic curve $C$ admits a curve $c$ with image in $Q$ and a monotonic function $s$ respecting~\eqref{geomlift}. Accordingly,
\begin{align*}
(\sigma_\star -X ds) (C'(t))= \sd{}{t} c|_{s(t)} - c'|_{s(t)} s'(t) = 0,
\end{align*}
where the last equality holds because of the chain rule. Furthermore, we have $ds(C'(t)) =s'(t) \neq 0$ since $s$ is monotonic.

The condition is sufficient because if a curve $C(t) = (q(t), X(t),s(t))$ is smooth, then the condition $ds(C')\neq 0$ implies that $s(t)$ is strictly monotonic, and we can use it as the function of the same name appearing in the definition of holonomic curves. Because $s$ is smooth and strictly monotonic, it can be inverted. Noting, the inverse $t(s)$, we can define the curve $c(s) = q(t(s))$ with image in $Q$. All that remains is to prove that $X(t)= c'(s(t))$. But this is immediate since the condition
\begin{align*}
(\sigma_\star -X ds) (C'(t)) = q'(t) - X(t) s'(t) = 0.
\end{align*}
yields $X(t) = q'(t) /s'(t) =q'(t) t'(s) = \sd{}{s} q(t(s)).$
\end{proof}

Note in particular that the second condition in the lemma, namely $ds(C'(t))\neq 0$ is important to exclude curves on $\overline{TQ}$ which are vertical for the submersion $\pi$; those are curves that lie within a single fibre over a certain $(q,s)\in\overline{Q}$.

\subsection{Framing $\overline{TQ}$\label{sec-frame}}

Classically when working with a manifold $M$ of dimension $n$, one defines $n$ coordinates $x^1,\cdots, x^n$ on (patches of) the manifold. Technically, the $x^i$ are $n$ sufficiently smooth functions defined on $M$. Coordinates functions are such that,
\begin{itemize}
  \item specifying values for all $n$ functions is equivalent to pinpointing a specific element $m\in M$, 
  \item their differentials provide bases for the cotangent spaces at every point of $M$, and 
  \item the partial derivatives $\partial_{x^i}$ provides a basis of the tangent spaces at every $m\in M$.
\end{itemize} Accordingly, a point $m\in M$, a vector $X\in T_m M$ and a cotangent vector  $\alpha\in T^\star_m M$ can each be encoded with $n$ real numbers, respectively the coordinates $m^i$ of $m$, the components $X^i$ such that $X=X^i \partial_{x^i}$ and the components $\alpha_i$ such that $\alpha = \alpha_i \, dx^i.$ So coordinates  actually achieve three encodings all at once.

When the manifold is a Lie group, there exists another possible tack. We can achieve the three objectives listed above without coordinates by using frames. A global frame of an $n$-dimensional manifold is a set of $n$ continuous vector fields on $M$ such that for every $m\in M$, the vector fields evaluated at $m$ provide a basis for $T_mM$. There exist manifolds that can not be globally framed; $S^2$ is a notorious example. But for Lie groups, any basis of the algebra -- that is the set of left-invariant vector fields -- provides such a frame. The dual algebra also provides local bases for covectors. For Lie groups of dimension $n\in\mathbb N$ that cannot be covered by a single chart of coordinates, it is always possible to define a set of $k$ functions with $k>n$ such that the values of those $k$ functions determine a unique point in $M$. Naturally those values cannot be chosen in all generality. We have a mapping $M\to\mathbb R^k$ that is injective but not surjective. With such a construction, we can therefore compute for a given problem by specifying $k+n+n$ real numbers where the first term accounts for the real values needed to specify a point $m\in M$, the second (resp. third) provides components for a cotangent (resp. tangent) vector at $m$.

However, the previous section defined a variational problem on $\overline{TQ}$ and not on the Lie group $Q$. Hence $\overline{TQ}$ needs its own framing. With $Q$ a Lie group of dimension $n\in\mathbb N$, any basis $(b_1,
\cdots,b_n)$ of the tangent space $T_eQ$ at the identity $e\in Q$ extends uniquely to a $n$-tuple $(B_1,\cdots, B_n)$ of left-invariant vector fields that are smooth and everywhere linearly independent. We also define the dual basis $(\beta^1,\cdots,\beta^n)$ of covectors on $Q$. To move to $\overline{TQ}$ recall the trivial projection $\sigma: \overline{TQ}\to Q$ and define $\theta^i = \sigma^\star \beta^i$ and the functions $\xi^i = \theta^i(X)$ where any element of $\overline{TQ}$ can be written as $(q,X,s)$ with $q\in Q$, $X\in T_q Q$ and $s\in [a,b]$. The functions $\xi^i$ are to frames what the quasi-velocities $\dot q^i$ are to coordinate systems.

In this way $(\theta^1,\cdots ,\theta^n,d{\xi^1},\cdots, d{\xi^n}, ds)$ provide a coframing of $\overline{TQ}$ (see appendix~\ref{app-framing}). Finally, note $\{E_{q^1},\cdots, E_{q^n}, E_{{\xi^1}}, \cdots , E_{{\xi^n}}, E_s\}$ the dual frame of $\overline{TQ}$. A general vector field $W$ on $\overline{TQ}$ can therefore be expanded as 
\begin{equation} 
  W = W_{q}^i\, E_{q^i} + W_\xi^i\, E_{\xi^i} + W_s\, E_s.
\end{equation}

Finally, recall from the appendix that for a specific Lie-group and a specific choice of left-invariant frame there exist $n^3$ constants $c^i_{jk}$ called the structure constant and such that 
\begin{align} 
  [B_i,B_j]& = c^k_{ij} \, B_k, &
  d\beta^k &=-\frac1 2\, c^k_{ij}\beta^i\wedge \beta^j,
\end{align}
where the square brackets denote the commutator.

This yields a commutator structure for the frame of $\overline{TQ}$ developed in~\eqref{appEcomm} and reproduced here for convenience: 
\begin{align}\label{Ecomm}
  \left[E_{q^i},E_{q^j}\right] &= c^k_{ij} \,E_{q^k}, &
    \left[E_{q^i},E_{\xi^j}\right] &=0, &
    \left[E_{\xi^i},E_{\xi^j}\right] &=0, &
    \left[E_{q^i},E_s\right] &=0, &
    \left[E_{\xi^i},E_{s}\right] &=0.
\end{align}

Also from the appendix equation~\eqref{appdthetafin} states that for any two vector fields $X$ and $W$ on $\overline{TQ}$:
\begin{equation}\label{dthetafin}
  d\theta^i(X,W) = - c^i_{jk} X_q^j\, W_q^k.
\end{equation}

\begin{table}
  \begin{center}
\begin{tabular}{c||c}
  Variables & Definitions\\
  \hline
  \hline
  $(b_1,\cdots,b_n)$ & A basis of $T_e Q$.\\
  $(B_1,\cdots, B_n)$ & Left-invariant extensions of the $b_i$\\
  $(\beta^1,\cdots, \beta^n)$ & Everywhere dual to the $B_i$\\
  $\pi:\overline{TQ}\to Q$ & Trivial projection of the (extended) vector bundle on $Q$.\\
$\theta^i$ & Pull-backs of the $\beta^i$ under $\pi$\\
$\xi^i|_{q,X,s}:= \theta^i(X)$ & Functions on $\overline{TQ}$\\
$(E_{q^1},\cdots, E_{q^n}, E_{{\xi^1}},\cdots,E_{{\xi^n}},E_s)$ & A frame of $\overline{TQ}$ that is everywhere dual to $(\theta^1,\cdots,\theta^n,d{\xi^1},\cdot,d{\xi^n},ds)$
\end{tabular}
\caption{A table of notations regarding bases on the tangent and cotangent spaces of $Q$ and $\overline{TQ}$ (the extended tangent bundle of $Q$).}
\end{center}
\end{table}

\subsection{Differential forms on $\overline{TQ}$ with holonomic vortex lines: Cartan and Lepage\label{sec-lepagecartan}} 

Although the definition~\eqref{varClassicalGeom} of $\mathscr E[\Gamma]$ is an obvious candidate for a functional $\mathcal F$ on $\overline{TQ}$ such that
$\mathcal F[L[\gamma]] = \mathcal E[\gamma]$ for all smooth curves $\gamma$ on $Q$,
it is not unique. This freedom turns out to be useful.

First note that for any choice of $n$ functions $p_1,\dots,p_n$ on $\overline{TQ}$,
the 1-form
\be{defTh}
\Theta = \mathcal L\,ds + p_i\,(\theta^i - \xi^i\,ds)
\ee
is such that for any holonomic curve $\Gamma$,
\be{LepageCongruence}
{E}[\Gamma] \equiv \int_\Gamma \Theta = \int_\Gamma \mathcal L\,ds = \mathscr E[\Gamma].
\ee
The central equality holds because lemma~\ref{lem-integrable} states that for holonomic curves, we have $(\theta^i-\xi^i ds)(\Gamma')  = 0$ since $\theta^i - \xi^i ds$ are the components of the covector valued form $\sigma_\star - Xds$.

What follows is a distillation, to our one-dimensional problem, of a particularly nice
chain of ideas in geometric mechanics. Cartan~\cite{ca22} showed that the form $p_i\,dq^i - H\,dt$ is a relative integral invariant of the
Hamiltonian flow on the extended phase space. Lepage~\cite{le36}, seeking to
extend these ideas to field theories---where our arc-length integral becomes a
multiple integral over some base space---noticed that $\mathcal L\,ds$ and the form
$\Theta$ of~\eqref{defTh} give the same integral on holonomic surfaces (holonomic
curves, for us). This was later developed, in the language of jet spaces, into the
variational sequence by Krupka~\cite{kr73,kr15}, who coined the term
\emph{Lepage equivalent} for two forms so related.

It may be precisely because the forms-modulo-contact idea was developed for multiple
integrals that it never became a standard way of introducing the Hamiltonian
construction. Yet in one dimension too it clarifies the passage from the Lagrangian
to the Hamiltonian formalism: we will see that $(i)$ the functions $p_i$ appearing in
$\Theta$ ought to be the conjugate momenta, so their appearance is now motivated rather
than postulated; $(ii)$ when the $p_i$ are the conjugate momenta, $\Theta$ is the
Poincaré--Cartan form; and $(iii)$ its integral is therefore exactly the energy of the
rod, provided we integrate over holonomic curves---the only curves on $\overline{TQ}$
that correspond to actual physical rods.

In a framed extended tangent bundle, the precise statement is the following.
\begin{proposition}\label{prop-pforced} Let $\overline{TQ}$ be an extended tangent bundle framed as in section~\ref{sec-frame} and $\mathcal L$ a function defined on $\overline{TQ}$ such that the $n\times n$ matrix of elements $P_{ij} = E_{\xi^i}E_{\xi^j} \mathcal L$ is invertible. Let also $p_1,\cdots p_n$ be functions on $\overline{TQ}$ and $\Theta$ the form defined in~\eqref{defTh}.

The 2-form $d\Theta$ is nonsingular and the vortex lines of $\Theta$ are holonomic if
and only if
\[
\forall i \in \{1,\dots,n\}:\quad p_i = E_{\xi^i}\mathcal L.
\]
\end{proposition}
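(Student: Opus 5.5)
Our plan is to compute $d\Theta$ explicitly in the coframe $(\theta^i,d\xi^i,ds)$ and then study its null direction. Write $dp_i = E_{q^j}p_i\,\theta^j + E_{\xi^j}p_i\,d\xi^j + E_s p_i\,ds$, and similarly for $d\mathcal L$. Using $d\theta^i = -\tfrac12 c^i_{jk}\theta^j\wedge\theta^k$ (from~\eqref{dthetafin}), we get
\begin{align*}
d\Theta = d\mathcal L\wedge ds + dp_i\wedge(\theta^i-\xi^i ds) - p_i\,d\xi^i\wedge ds - \tfrac12 p_i c^i_{jk}\theta^j\wedge\theta^k .
\end{align*}
Next we contract with a general vector $W = W_q^iE_{q^i}+W_\xi^iE_{\xi^i}+W_sE_s$ and write $\iota_W d\Theta=0$ as a linear system. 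It is convenient to change coframe to the contact forms $\eta^i:=\theta^i-\xi^ids$ together with $d\xi^i$ and $ds$, and to set $\lambda_i := E_{\xi^i}\mathcal L - p_i$. The $d\xi^j\wedge ds$ part then becomes $\lambda_j\,d\xi^j\wedge ds$, and the $d\xi^j\wedge\eta^i$ part becomes $-E_{\xi^j}p_i\,\eta^i\wedge d\xi^j$.

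For sufficiency, we assume $\lambda=0$. The $d\xi$-component of $\iota_W d\Theta$ is then $-E_{\xi^j}p_i\,\eta^i(W) = -P_{ij}\eta^i(W)$, up to the sign convention. Since $P$ is invertible, this forces $\eta^i(W)=0$ for every null vector $W$. The remaining components ($\eta$ and $ds$) form $n+1$ equations. With $\eta(W)=0$, the $\eta$-components read $P_{ij}W_\xi^j = (\text{terms in } W_s)$, so $W_\xi$ is determined by $W_s$. Hence the kernel is at most one-dimensional. Because the space is odd-dimensional, the kernel of a 2-form is at least one-dimensional, so it is exactly one-dimensional, and $d\Theta$ is nonsingular. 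The null vector must have $W_s\neq0$, since $W_s=0$ together with $\eta(W)=0$ gives $W_q=0$ and then $W_\xi=0$. So Lemma~\ref{lem-integrable} applies and the vortex lines are holonomic. (The $ds$-component is automatically consistent, because the rank of a skew form is even.)

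For necessity, we assume $d\Theta$ is nonsingular with holonomic vortex lines, and we write the null vector as $W$ with $W_s=1$, $\eta(W)=0$, so $W_q^i=\xi^i$. Contracting gives
\begin{align*}
\iota_W d\Theta = -\lambda_j W_\xi^j\,ds + \lambda_j\,d\xi^j + (\ldots)\,\eta^i + (\ldots)\,ds .
\end{align*}
The contraction of $-E_{\xi^j}p_i\,\eta^i\wedge d\xi^j$ with $W$ contributes only $\eta$-terms because $\eta(W)=0$. The $d\xi^j$-coefficient is therefore exactly $\lambda_j$, up to sign, and its vanishing gives $p_j=E_{\xi^j}\mathcal L$. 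The bookkeeping to check is that no other term contributes to $d\xi^j$ once $\eta^i(W)=0$. The $dp_i\wedge\eta^i$ term contributes $-\eta^i(W)\,dp_i$, which vanishes. The $d\mathcal L\wedge ds$ and $-p_i\,d\xi^i\wedge ds$ terms combine into $\lambda_j\,d\xi^j\wedge ds$ plus $ds$- and $\theta$-wedges, which give no $d\xi$ component under $W_s=1$ other than $\lambda_j$.

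The main obstacle is the sufficiency step, namely showing that the null space is exactly one-dimensional. The plan there is to order the unknowns as $(\eta(W), W_\xi, W_s)$. In that ordering the relevant matrix is block-triangular, with $P$ and $-P^{T}$ as blocks. Its $2n\times 2n$ minor is therefore invertible, which gives rank $2n$ and settles the claim.
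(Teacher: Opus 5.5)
Your proposal is correct and follows essentially the paper's proof. Necessity comes from the $d\xi$-component of the contraction once $X_q^i=\xi^i X_s$ is imposed, and sufficiency from invertibility of $P$ forcing the contact condition, after which $X_\xi$ is fixed by $X_s$. Passing to the contact coframe $\eta^i=\theta^i-\xi^i\,ds$ and invoking odd dimension to get a kernel of dimension at least one is only a tidier bookkeeping of the same computation. It replaces the paper's explicit check that the $W_s$-equation follows from the $W_q$-equations.
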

\begin{proof}
  First, with $X$ and $W$ vector fields on $\overline{TQ}$ and written in component on the frame defined in section~\ref{sec-frame} $X = X_q^i\, E_{q^i} + X_\xi^i \,E_{\xi^i} + X_s\, E_s$ and $W = W_q^i\, E_{q^i} + W_\xi^i \,E_{\xi^i} + W_s\, E_s$, compute
\begin{eqnarray}
d\Theta (X,\, W) &=& d (\mathcal L ds) (X,\, W) +\nonumber \\
&&\quad\Big(\big( E_{q^j} (p_i) \,\theta^j+ E_{\xi^j}  (p_i)\,  d\xi^j + E_s (p_i) \, ds\big ) \wedge( \theta^i-\xi^i \, ds) + p_i \, d\theta^i- p_i\, d\xi^i \wedge ds\Big)(X,\, W) \nonumber\\ 
&=& W_s \, \Big(E_{q^i} \mathcal L \, X_q^i + E_{\xi^j} \mathcal L \, X_\xi^j 
- \xi^i\big( (E_{q^j} p_i) X_q^j +(E_{\xi^j} p_i) X_\xi^j  \big)
-E_s p_i X_q^i
 - p_i\, X_\xi^i \Big)\nonumber\\ 
&&\quad + W_q^i \, \Big(-(E_{q^i} \mathcal L )  X_s  + (E_{q^j} p_i) X_q^j +(E_{\xi^j} p_i) X_\xi^j  + (E_s p_i)\, X_s \\
&&\quad\quad  -  E_{q^i} (p_j) X_q^j +X_s \xi^j  E_{q^i} p_j - p_k\, c^k_{ji} X_q^j\Big)\nonumber\\
&&\quad + W_\xi^i \Big( -(E_{\xi^i} \mathcal L )  X_s  - E_{\xi^i}(p_j) X_q^j+\xi^j X_s\, E_{\xi^i} p_j + p_i \,X_s\Big). \nonumber
\end{eqnarray}
Hence finding $X$ such that $\forall W\in\mathfrak X(\bar{TQ}): d\Theta(X,W) = 0$ is equivalent to solving
\begin{eqnarray}
\Big( E_{q^i} \mathcal L- \xi^j\,  (E_{q^i} p_j) -E_s p_i \,  \Big)   X_q^i 
+ \Big(E_{\xi^i} \mathcal L- \xi^j (E_{\xi^i} p_j) - p_i \Big) X_\xi^i 
&=&0\nonumber\\ 
\Big((E_{q^i} p_j) -  E_{q^j} (p_i) - p_k c^k_{ij}\Big) X_q^i 
+ E_{\xi^i} p_j~X_\xi^i
+\Big( E_s p_j\, - E_{q^j} \mathcal L +  \xi^i  E_{q^j} p_i\Big)  X_s  &=&0\label{sysdeff}\\
 - E_{\xi^j}(p_i) X_q^i + \Big( -(E_{\xi^j} \mathcal L )  +\xi^i \, E_{\xi^j} p_i + p_j\Big) \,X_s&=&0.\nonumber
\end{eqnarray}

\textbf{Necessity of $p_i =E_{\xi^i}\mathcal L$.} 
Suppose $d\Theta$ is nonsingular with holonomic vortex lines. The tangent vector $X$ to the vortex line must therefore respect $ds(X)\neq 0$ and $(\theta^i -\xi^i ds)(X) = 0$, that is $X_s\neq 0$ and $X_q^i = \xi^i\, X_s$. Substituting these relations into the last equation of\re{sysdeff} implies 
\be{fdef}
p_i = E_{\xi^i} \mathcal L.
\ee 

\textbf{Sufficiency of $p_i = E_{\xi^i}\mathcal L$.} Assuming\re{fdef} and substituting in\re{sysdeff} yields 
\begin{eqnarray}
\Big( E_{q^i} \mathcal L- \xi^j\,  (E_{q^i} {E_{\xi^j} \mathcal L})  - E_s E_{\xi^i} \mathcal L \Big)   X_q^i 
- X_\xi^i ~~ P_{ij}~\xi^j
&=&0,\nonumber\\
\Big((E_{q^i} {E_{\xi^j} \mathcal L}) -  E_{q^j} ({E_{\xi^i} \mathcal L})- p_k c^k_{ij}\Big) X_q^i 
+ X_\xi^i~ P_{ij}
+\Big( E_s {E_{\xi^j} \mathcal L}\, - E_{q^j} \mathcal L+  \xi^i  E_{q^j} E_{\xi^i}\mathcal L \Big)  X_s  &=&0,\label{sysdeff2}\\
 P_{ji} (X_q^i - \xi^i  \,X_s)&=&0.\nonumber
\end{eqnarray}

Since $P$ is assumed invertible,  the last equation of\re{sysdeff2} implies that 
\begin{equation}\label{prooffirstcond}
  X_q^i= \xi^i \, X_s.
\end{equation}
 Substituting these relation in the first two lines of\eqref{sysdeff2} gives 
\be{sysdeff3}
\begin{split}
\xi^j (X_\xi^i\,P_{ij} )&= \xi^j\Big( E_{q^j} \mathcal L- \xi^i\,  (E_{q^j} {E_{\xi^i} \mathcal L})  
- \,E_s {E_{\xi^j} \mathcal L} \Big)\, X_s, \\
 X_\xi^i P_{ij} &= \Big( E_{q^j} \mathcal L- E_s {E_{\xi^j} \mathcal L}-\xi^i E_{q^i} {E_{\xi^j} \mathcal L} +\xi^i\, p_k\, c^{k}_{ij}\Big)  X_s.
\end{split} 
\ee

Therefore, if $X$ is non-trivial and in the null space of $d\Theta$, it must have $X_s\neq 0$ because otherwise $X= 0$. This together with~\eqref{prooffirstcond} shows that any vector in the kernel of $d\Theta$ is holonomic.

Finally, the second set of equations in~\eqref{sysdeff3} implies the first. Gathering we therefore have that if we choose the functions $p_i = E_{\xi^i} \mathcal L$, the linear system~\eqref{sysdeff} is equivalent to the linear system
\begin{equation} \label{forward}
  \begin{cases} 
     X_q^i&= \xi^i \, X_s,\\
      X_\xi^i P_{ij} &= \Big( E_{q^j} \mathcal L- E_s {E_{\xi^j} \mathcal L}-\xi^i E_{q^i} {E_{\xi^j} \mathcal L} +\xi^i\, p_k\, c^{k}_{ij}\Big)  X_s.
  \end{cases}
\end{equation}
The solution space of~\eqref{forward} is one-dimensional (parametrized for instance by $X_s$). That is because the matrix $P_{ij}$ is invertible. Accordingly, the null-space of $d\Theta$ is one-dimensional: $d\Theta$ is non-singular 2-form on the odd-dimensional space $\overline{TQ}$.

\end{proof}

\begin{rem} 
  The second set of equations in\re{sysdeff3} can be rewritten as
\be{ELgen}
X_s \, E_{q^j} \mathcal L-  \Big( X_s \, E_s +X_q^i\, E_{q^i} +X_\xi^i E_{\xi^i} \Big) ( {E_{\xi^j} \mathcal L}) +\xi^i\, p_k\, c^k_{ij}\, X_s= X_s \,  E_{q^j} \mathcal L-  X ( {E_{\xi^j} \mathcal L})+ \xi^i\, c^k_{ij}\, E_{\xi^k}(\mathcal L)\, X_s=0,
\ee
which, along a holonomic curve parametrized by $s$, are the equations first
written by Poincar\'e~\cite{poincare01} for left-invariant frames on Lie
groups and extended by Hamel~\cite{hamel04} to arbitrary frames --- the Hamel
equations, in the terminology of~\cite{blma09} --- and which reduce to the
Euler--Lagrange equations\re{ELclassical} when the frame is induced by
coordinates.
\end{rem}

\begin{rem}
Now that the $p_i$ are specified, we recognize the Poincaré--Cartan form since 
\[
\Theta = \mathcal L ds + p_i \, \left( \theta^i - \xi^i ds \right)  = p_i \, \theta^i - (p_i\, \xi^i - \mathcal L) ds  = p_i\, \theta^i -\mathcal H ds,
\]
where we recognized the Hamiltonian function $\mathcal H = p_i\, \xi^i -\mathcal L$.
\end{rem}

\subsection{Changing frame \label{sec-framechange}}

In the classical Hamiltonian picture, one needs to change variable from the pseudo-velocities ${\dot q^{i}}$ to the generalized momenta. It is in general a nonlinear change of variables and in many cases, computing the inverse transform is cumbersome if at all tractable. 

Here comes a first advantage of the frame approach. We can use the $p_i$ as new coordinates over the fibres (instead of the $\xi^i$). But it turns out that we never need to compute the change of variables explicitly. Instead, it is enough to compute the new coframe $(\theta^i,dp_i,ds)$ and its dual, call it $(D_{q^i},D_p^i,D_s)$. So translating expressions written in the old frames of section~\ref{sec-frame} to these new frames is always a linear operation. We completely do away with the need of inverting a nonlinear relation.

To proceed, note that because of the duality between our chosen frame and coframe, for any function $\varphi$ on $\overline{TQ}$, we have $d\varphi = (\theta^i\otimes E_{q^i} + d\xi^j\otimes E_{\xi^j} + ds\otimes E_s)\varphi$. In particular,
\[
d p_j = E_{q^i} p_j \, \theta^i + E_{\xi^k}  p_j\, d\xi^k + E_s  p_j\, ds = (E_{q^i}E_{\xi^j} \mathcal L)\, \theta^i+
(E_{\xi^k}E_{\xi^j} \mathcal L)\, d\xi^k + (E_s  E_{\xi^j} \mathcal L)\, ds,
\]
so that upon defining the matrix $Q_{ij} = E_{q^i}E_{\xi^j}\mathcal L$ and the line $\c t_j= E_s E_{\xi^j} \mathcal L$, the change of basis is given by 
\[
\bma
\theta^i & dp_j & ds 
\ema 
= 
\bma
\theta^a & d\xi^k& ds 
\ema 
\underbrace{
\bma
\Big(\textrm{Id}\Big)_{a}^i & \Big( Q\Big)_{kj} & 0\\
0 & \Big( P\Big)_{kj}  & 0 \\
0& \c t_j & 1
\ema
}_A.
\]

Let $\{D_{q^i}, \, D_{p}^j,\, D_s\}$ be the basis dual to $\{\theta^i, dp^j,ds\}$, there exists a matrix $B:\bar{TQ}\mapsto\mathbb R^{(2n+1)\times(2n+1)}$ such that 
\be{computeDframe}
\bma 
D_q\\
D_p\\
D_s\ema = 
B \bma E_q\\ E_\xi\\ E_s\ema = \bma \textrm{Id} & -Q P^{-1} & 0 \\ 0 & P^{-1} & 0 \\ 0 & - \c t P^{-1} & 1\ema \bma E_q\\ E_\xi\\ E_s\ema,
\ee
where $B= A^{-1}$ because of the duality. 

\begin{rem} When $Q\neq 0$ and $\c t \neq0$, the vectors $D_{q^i}\neq E_{q^i}$ and the vector $D_s\neq E_s$. 
\end{rem} 
\begin{rem} 
  However, the components are left unchanged. To see this, consider the components $X_q^i,\, X_\xi^j,\, X_s$ of a vector $X$ in the $E$ basis. Its components $Y_q^a,\,Y_{pb},\, Y_s$ in the $D$ basis are then computed by considering 
\[
X = \bma Y_q^a & Y_{pb}&  Y_s\ema D = \bma Y_q^a & Y_{pb}&  Y_s\ema B\, E ~~\Longrightarrow~~
\bma Y_q & Y_{p}&  Y_s\ema = \bma X_q & X_{\xi}&  X_s\ema A.
\]
In particular, we find $Y_q = X_q$ and $Y_s = X_s$ while $Y_p = X_q \, Q + X_\xi \, P + X_s \, \c t$. To simplify notations we rename the column $Y_p = X_p$ so that we have 
\[
X = X_q^i \, E_{q^i} + X_\xi^j\, E_{\xi^j} + X_s \, E_{s} =X_q^i \, D_{q^i} + X_{pj}\, D_{p}^j + X_s \, D_{s}.
\]
\end{rem}
\subsection{Hamiltonian picture on framed and extended phase space $\overline{TQ}$}

Now we consider the functional $E$ and write 
\[
E[\Gamma] = \int_\Gamma \Theta = \int_\Gamma \mathcal L ds + p_i\, (\theta^i - \xi^i ds) = \int_\Gamma \overbrace{(\mathcal L - p_i \, \xi^i )}^{-\h} ds + p_i\, \theta^i,
\]
where $\mathcal H$ is the Hamiltonian function of the system.
Asking that $\Gamma$ be critical for $E$ is equivalent to asking that it be a flow line of a vector field $X$ such that $\forall W: d\Theta (X,W) = 0$. We find
\begin{eqnarray}
d\Theta (X,W)&=& d(p_i \theta^i - \h ds) (X,W)\nonumber \\
&=&\big( dp_i\wedge \theta^i + p_i d\theta^i + ds\wedge d\h\big) (X,W)\nonumber \\
&=& X_{pi} W_q^i - X_q^i \, W_{pi}+p_i\, d\theta^i(X,W) + X_s\, W(\h) - W_s\, X(\h). \label{dTheta1}
\end{eqnarray}
Substituting\re{dthetafin} in\re{dTheta1} yields that $\forall W\in\mathfrak X(\bar{TQ})$,
\begin{eqnarray}
&d\Theta(X,W) =W_q^i \left(X_{pi} -  p_a \, c^a_{ji} X_q^j + X_s\, D_{q^i}(\h)\right) 
+ W_{pi} \left( X_s D_{p}^i(\h) - X_q^i\right)      +    W_s \left ( X_s\, D_s(\h) - X(\h) \right) =0,\nonumber \\
&\Longleftrightarrow\qquad\qquad \left\{
\begin{aligned}
X_{q}^i& = X_s\, D_p^i(\h),\\
X_{pi} &= - X_s\, D_{q^i}(\h) +  \, p_a\, c^a_{ji} X_q^j,\\
X_s D_s(\h) &= X(\h).
\end{aligned}
\right . &\label{HamiltonGen}
\end{eqnarray}
The first two equations are the standard Hamiltonian equations while the last is the property that $\h$ is  invariant under the dynamics whenever $\h$ has no explicit dependence on $s$.

\subsection{A Poisson structure on the extended tangent bundle}

To derive the Poisson structure associated with the Hamiltonian flow, we note that for any function $\varphi \in\mathfrak F(\bar{TQ})$, we have 
\begin{eqnarray}
X(\varphi) &=& X_{q}^i \, D_{q^i} \varphi + X_{pj} D_{p}^j \varphi   + X_s D_s\varphi\nonumber \\
&\stackrel{\eqref{HamiltonGen}}=&X_s\left( D_p^i(\h)\, D_{q^i} (\varphi) - D_{q^j}(\h) \, D_{p}^j (\varphi)  +
 p_a\, c^a_{ji} D_p^j(\h) D_p^i(\varphi) 
+ D_s (\varphi) \right) .
 \end{eqnarray}
 So that upon defining the bivector 
 \be{defJ}
 \mathcal J = D_{q^i}\otimes D_{p}^i - D_p^i\otimes D_{q^i}  + p_a \, c^a_{ij} D_p^j\otimes D_p^i,
 \ee
 and the poisson bracket 
 \be{defbra} 
 \{ F,G \} = \mathcal J(dF,dG),
 \ee
 We do find that 
 \[
 X(\varphi) = X_s \, \bigg( \big( \{ \varphi,\h\} \big) +D_s(\varphi) \bigg).
 \]
Accordingly when following a curve $\Gamma\subset \overline{TQ}$ that is a rod equilibrium, one sees any function $\varphi$ change at the following rate: 
 \begin{equation}\label{evolve}
 \sd{\varphi}s  = 
\{ \varphi,\h\}  +  D_s(\varphi).
 \end{equation} 

The bivector~\eqref{defJ} was read off the Hamiltonian
flow~\eqref{HamiltonGen}, so the bracket~\eqref{defbra} is bilinear,
antisymmetric and a derivation in each slot by construction; only the Jacobi
identity requires an argument. It holds because $\mathcal J$ is the inverse
of the restriction of the closed form $d\Theta$ to the subbundle
$\ker ds\subset T\overline{TQ}$: the bracket is therefore the one induced by
a closed nondegenerate two-form on each slice $s=\mathrm{const}$, for which
the Jacobi identity is classical~\cite{lima87}. 

Remembering that $X$ is by construction tangent to a holonomic curve, we must have that $(\theta^i-\xi^i ds)(X) = 0.$ That is $X_{q}^i = \xi^i \, X_s$. Substituting this relation in the first line of~\eqref{HamiltonGen}, immediately yields the quasi-velocities along an equilibrium: 
\begin{equation}\label{rDpH}
  \xi^i = D_{p_i} \mathcal H.
\end{equation}
That specifies how we move in $Q$ when following a rod equilibrium. However, when working in the Hamiltonian picture we usually have to reexpress these quasi-velocities as function of the generalized momenta and in general this requires an inversion of a nonlinear relation. When the inverted relation $\xi^i(q,p)$ is available and manageable, the
evolution in $Q$ is expressed directly as a function of the $p_i$. When it is
not --- because no closed form exists, or because the closed form is too
unwieldy to be of practical use --- we can write the more general system
\begin{equation}
  \begin{cases}
    q' &= B_i \, \xi^i,\\
    p_i'& = \{p_i, \mathcal H\} + D_s \mathcal H,\\
    {\xi^i}'  & =\{\xi^i,\mathcal H\} + D_s \xi^i. 
  \end{cases} 
\end{equation}

Remembering the algebra of section~\ref{sec-frame}, the last two equations can be written more explicitly as 
\begin{equation} \label{hamflow}
  \begin{cases} 
    q' & = B_i \, \xi^i,\\
    p_i' &= -D_{q^i} \mathcal H +c_{ji}^k \, p_k \xi^j,\\
    {\xi^i}' &= - \left(D_{q^j} \mathcal H + \left(p_a c^a_{jk} + Q_{kj}\right) \xi^k + \c t_j \right) \left(P^{-1}\right)^{ji}.
  \end{cases}
\end{equation}

 As is always the case on the (extended) tangent bundle, this Poisson structure~(\ref{defJ},\ref{defbra}) defined on $\overline{TQ}$ depends on $\mathcal H$. That is, if we change the constitutive laws of the rod (i.e., we change $\mathcal H$), then the definitions of $p_i$ change and the Poisson bracket adapts.

 Most treatments pass to the cotangent bundle $\overline{T^\star Q}$, where the
symplectic structure is canonical and problem-independent. In this study, we deliberately stay on the
extended tangent bundle: the construction is self-contained here, the momenta arise in
the frame without a fibre map, and---as we will see---the freedom to frame the fibre as
we please is specifically useful to distinguish the conjugate momenta from the internal stresses of the rod (see section~\ref{sec-interact}).

There is however a reason to keep both pictures in view. A constraint such as
unshearability or inextensibility may be imposed by penalizing the offending strain with
a stiffness $A$ and letting $A\to\infty$. On the Lagrangian side this limit is singular,
as it removes a degree of freedom; but the conjugate momentum stays finite (it is the
constraint reaction), and the stiffness enters the Hamiltonian only through $1/A$, so on
the cotangent bundle---where the bracket carries no $A$ at all---the limit $1/A\to0$ is
regular. The penalty thus selects the physically correct (d'Alembert) constrained
dynamics while remaining within the smooth Hamiltonian framework. We develop this limit,
and the constraint-adapted choices of frame it suggests, in a companion paper; here we
note only that it is the cotangent counterpart of the present construction that makes it
transparent.

Finally note that although the definition~\eqref{defJ} was made using our frames. The resulting bivector is a geometric object the definition and existence of which is completely independent of any choice of coordinates or frames. It does project to the canonical bracket under the standard fibre projection. The third term in~\eqref{defJ} is precisely the price we pay for choosing to work with the left-invariant forms $\beta^i$ which are adapted to our Lie group setting but which are not closed (hence the structure constants). Reassuringly, we will see in the applications that their appearance expresses a well known trick in rod theories: when working in the local basis $\tfrac d {ds}\to \tfrac d{ds} + \c u^\times.$ The structure constant for $SO(3)$ exactly encode the move to the material frame.
 
 \subsection{The story so far}
 We started by wanting to minimize
 $$\int_\Gamma \mathcal L ds,$$
 over holonomic curves on $\overline{TQ}$ and assuming certain boundary conditions. We then showed that for holonomic curves $\Gamma$ on $\overline{TQ}$, the functional is equal to
 \begin{align}\label{ThetaSummary}
 \int_\Gamma \mathcal L ds= \int_\Gamma p_i\, \theta^i - \mathcal H ds,
 \end{align}
where $p_i = E_{\xi^i}\mathcal L$ and $\mathcal H = p_i\, \xi^i-\mathcal L$ is a function on $\overline{TQ}$. But this second functional has the advantage that its critical curves are vortex lines of $d(p_i\theta^i -\mathcal H ds)$.
 
 Therefore, finding extremal curves of $\int_\Gamma \mathcal L ds$ over holonomic curves is equivalent to finding extremal curves of $\int_\Gamma p_i\, \theta^i -H ds$ over \emph{all smooth curves} in $\overline{TQ}$ (since we are guaranteed that its vortex lines are holonomic).

 The second integral in equation~\eqref{ThetaSummary} has the advantage that it readily defines critical curves as the vortex lines of $d(p_i\theta^i-\mathcal H ds)$. These vortex lines are found to obey the set of equations~\eqref{HamiltonGen} reproduced here for convenience: 
 \begin{equation*}
 \left\{
\begin{aligned}
X_{q}^i& = X_s\, D_p^i(\h),\\
X_{pi} &= - X_s\, D_{q^i}(\h) +  \, p_a\, c^a_{ji} X_q^j,\\
X_s D_s(\h) &= X(\h).
\end{aligned}
\right .
 \end{equation*}

Equation~\eqref{ThetaSummary} hints at the physical meaning of $p_i\,\theta^i|_{\gamma(s^\star)}$ as the generalized force applied by the material described by $s>s^\star$ on the material labeled by $s\leq s^\star$. Indeed, if the rod is along a critical curve (i.e.: a mechanical equilibrium), then we reproduce the question of isolating the portion that is between the labels $(c,d)$ and consider the variation 
 \begin{align*}
 \delta \left(\int_{\Gamma_c^d} p_i\theta^i-\mathcal H ds\right) [W] = \int_{\Gamma_c^d} \iota_W d(p_i\theta^i-\mathcal H ds)  + \int_{\Gamma_c^d}d (\iota_W( p_i\theta^i-\mathcal H ds)) = p_i\theta^i(W)|_{\Gamma(d)} - p_i\theta^i(W)|_\Gamma(c)
 \end{align*}
 where the first integral vanishes because $\Gamma$ is along a vortex line for a rod at equilibrium.
 
 But $\theta^i(W) = \pi^\star \beta^i(W) = \beta^i(\pi_\star W)$. Hence when the portion $(c,d)$ of the rod is displaced along the vector field $\pi_\star W$, a work is produced by the forces $p_i\,\beta^i$ applied at the tips of the portion and the potential energy varies exactly by that amount.

\subsection{Recipe for rod problems in framed and extended tangent bundles \label{sec-steps}}
Assuming we have a generalized rod --it could be a simple rod, a birod, an n-rod, or a rod constrained in some way -- such that the configuration of each section can be encoded as an element of a certain Lie group $Q$ of dimension $n\in \mathbb N$. The following steps lead one systematically from the energy functional to the equilibrium equations for the rod. As a by-product, the construction equips $\overline{TQ}$ with a Poisson structure; its consequences are 
not pursued here, but it underpins the study of conserved quantities and
stability which will be studied in sister papers. 

\begin{enumerate}
  \item
Compute a specific left-invariant frame of $Q$. One systematic way of doing so is to choose a faithful representation of $Q$. That is an injective homomorphism $\rho: Q\to \GL(k,\mathbb R)$ where $k\in\mathbb N$ is the dimension of the representation\footnote{The dimension of $Q$ and the dimension $k$ of the representation must be distinguished. For instance $SE(3)$ is a six-dimensional Lie group ($n=6$) with a standard faithful  representation in $\GL(4,\mathbb R)$ -- that is $k=4$}. We can then choose $n$ curves $\gamma_1,\cdots, \gamma_n$ defined on some interval $I\subset \mathbb R$ with $0$ in its interior and such that $\forall i: \gamma_i(0) = e$ and such that their tangent vectors be linearly independent. They are represented by $n$ curves $\mu_i(s) = \rho(\gamma_i(s))$ with linearly independent tangent vectors $b_i  := \tfrac{d}{ds} \mu_i(s)$. These vectors are therefore written as $n$ different $k\times k$ real matrices that are linearly independent and denoted $b_1,\cdots b_n$. 

The vector space generated by these matrices under standard matrix addition and multiplication by real numbers is by construction isomorphic to $T_e Q$. To get left-invariant vector fields, for any $M\in \mathfrak{Im}\rho$ we define the $n$ curves $M \, \mu_i(s)$ and their tangent vectors are then $M\, b_i$. The frame $B_1,\cdots, B_n$ of $Q$ that we used since section~\ref{sec-frame} can therefore be defined such that everywhere on $Q$, we have $\rho_\star(B_i|_q) = \rho(q)  b_i,$ where matrix multiplication is implied between $\rho(q)$ and $b_i$.

In particular if $V$ is a vector somewhere on $Q$, we can define its components as the numbers $\xi^i$ such that $\rho_\star (V|_q) = \xi^i \, \rho(q)\, b_i$. Because these numbers are uniquely defined for any vector $V$, they are indeed functions on $\overline{TQ}$: the quasi-velocities defined above.

\item Compute the structure constants $c_{ij}^k$. Note that this can be done systematically by expressing the matrix commutators of the $b_i$ as linear combinations of the basis : $[b_i,b_j]:= b_i\, b_j- b_j\,b_i = c^k_{ij} b_k.$
\end{enumerate}

The steps listed so far need only be done once for any Lie group pertinent to a specific rod problem. They do not depend on the specific constitutive laws of that rod. 

\begin{enumerate}
\setcounter{enumi}{2}
\item Write the energy functional in terms of the quasi-velocities $\xi^i$ and a suited expression of the configuration (any set of coordinates, or the chosen matrix representation, or another representation, or some other trick -- like quaternions to encode rotations).
\end{enumerate}

Next we take a moment to understand how to apply the operators $E_{q^i}$ and $E_{\xi^i}$ in practice. The $E_{q^i}$ derivatives are by definition vector fields that project to $B_i$ when acting on configurations but leave the $\xi^j$ unchanged since by duality we must have $d\xi^i(E_{q^j})=0$. For any function $\varphi:Q\to\mathbb R$, pulling back through the representation $\rho$ and extending to an open set in $GL(k)$ gives
\begin{equation}\label{derivativebyBi}
B_i \varphi|_q = \frac{\partial \varphi}{\partial M^c_d}\, \rho(q)^c_f\, (b_i)^f_d,
\end{equation}
where lower-case indices at the beginning of the alphabet run from 1 to $k$ and those in the middle run from 1 to $n$. So to compute $E_{q^j} \mathcal L$ we apply~\eqref{derivativebyBi} on any configuration and treat the quasi-velocities as constants for this differential operator. 

The $E_{\xi^i}$ operator project to $0$ under $\sigma_\star$ so they treat configuration information as constant and act like partial derivatives on expressions depending on the $\xi^i$. 

\begin{enumerate}
\setcounter{enumi}{3}
\item Define the conjugate momenta $p_i  =E_{\xi^i} \mathcal L$, the matrix-valued functions $P$ with image in $\mathbb R^{n\times n}$ and elements $P_{ij} = E_{\xi^i}E_{\xi^j} \mathcal L$, $Q$ with elements $Q_{ij} = E_{q^i}E_{\xi^j}\mathcal L$, and the line $\c t_j= E_s E_{\xi^j} \mathcal L$. Prepare the definition of the $D_{q^i}, D_p^i$ and $D_s$ according to section~\ref{sec-framechange}.

\item Compute the Hamiltonian function $\mathcal H = p_i \xi^i- \mathcal L$. If the change of variable $\xi^i\to p_i$ is practical then apply it here to get an explicit function of the configuration $q$ and the momenta. If the change is intractable or impractical, keep $\mathcal H$ as is and do not forget when computing Poisson brackets that $D_q$ and $D_p$ act on the functions $\xi^i$ according to the algebra described in section~\ref{sec-frame}.

\item Finally, write down the differential system that encodes the Hamiltonian flow. If the $p_i(\xi)$ functions can be inverted, solve the first two equations of~\eqref{hamflow}. If $Q$ is encoded by the representation $\rho$, then the $b_i$ are a set of $n$ particular constant matrices, and we directly have $\rho(q)' = \xi^i(p)\, \rho(q)\, b_i.$  
If however it is not possible or too cumbersome to invert $p_i(\xi)$, then we can solve for the whole differential system~\eqref{hamflow}.
\end{enumerate}

\section{One rod, three encodings\label{sec-singlerod}}

When the internal energy of the rod is the only strain-dependent term in the
potential energy, the components $p_i$ of the momentum covector $p_i\,\beta^i$
are the material force and moment $(\c n,\c m)$ introduced in
section~\ref{sec-cosserat}. This section shows how the machinery developed
above applies to a simple and well-studied problem: a weight hanging at the
tip of a rod. The very objects that are standard in rod theories built by
balancing forces and moments appear unprompted in the Hamiltonian picture,
provided that this picture is constructed as in section~\ref{sec-theory}. To keep
the tedium in check, we work under the unshearability and inextensibility
assumptions --- the Kirchhoff case, for which $Q=SO(3)$ and $p_i = \c m_i$;
the full Cosserat case proceeds identically on $SE(3)$ and is not carried
out here.

The configuration of every section is encoded by a rotation that applies the lab frame onto the material frame. So for this application, $Q$ is the rotation group. It has an obvious representation as $SO(3)$. So any rotation is encoded by the matrix $R\in SO(3)$ that can be applied to the column of the components of a three-dimensional vector in the lab frame to get the column of components of the rotated vector. 

Because of the unshearability and inextensibility assumptions, the centreline then obeys the differential equation $\v r' = \v d_3$ or in components $\v r' = R\,\v e_3$ where $\v e_3$ stands for the constant column $\v e_3  = \begin{pmatrix} 0 & 0 &1\end{pmatrix}^T$.

\textbf{Step 1.} Choose three curves passing through the identity 
\begin{align} 
  R_1(t) & = \begin{pmatrix} 1 & 0 & 0 \\ 0 & \cos t & -\sin t\\ 0 & \sin t & \cos t \end{pmatrix}, & 
R_2(t) & = \begin{pmatrix} \cos t & 0 & \sin t \\ 0 &1 & 0\\ - \sin t& 0  & \cos t \end{pmatrix}, & 
R_3(t) & = \begin{pmatrix} \cos t & -\sin t & 0 \\ \sin t & \cos t & 0 \\ 0 & 0 & 1 \end{pmatrix},
\end{align} 
and define the tangent vectors at the identity: 
\begin{align} \label{biso3}
  b_1 := R_1'(0) &=\begin{pmatrix} 0 & 0& 0 \\ 0 & 0& -1 \\ 0 & 1 & 0\end{pmatrix}& 
    b_2 := R_2'(0) &=\begin{pmatrix} 0 & 0& 1 \\ 0 & 0& 0 \\ -1 & 0 & 0\end{pmatrix}& 
      b_3 := R_3'(0) &=\begin{pmatrix} 0 & -1& 0 \\ 1 & 0& 0 \\ 0 & 0 & 0\end{pmatrix}& 
\end{align}
The chosen left-invariant vector fields on $Q$ are then defined by $B_i = R\, b_i$. Note that for this framed group, we find that $(b_i)_j^k$ is fully antisymmetric with $(b_1)^3_2 = 1$.

\textbf{Step 2.} Computing the commutators explicitly yield that $c^k_{ij}$ is the fully antisymmetric tensor such that $c^3_{12} = 1$.

For the rotations with this particular representation, the left-invariant vector fields correspond to rotation rates expressed in the local frame.
Therefore, the quasi velocities $\xi^i$ are the angular velocities, and we will denote $\xi^i = \c u^i$ to stick with the standard notation of rod theories.

As long as we have no reason to depart from the original choice of $b_i$, these first three steps need not be repeated for any rod problems on $Q=SO(3)$. The structure constant $c^k_{ij}$ that we have just established simply encode the cross product that is ubiquitous in computations with angular velocities. That is, for any two 3-tuples $a_k$ and $b^i$, we have that 
\begin{equation} \label{struct_cross}
  a_k b^i c^k_{ij} = (a\times b)_j.
\end{equation}

Given a different column $a^k$ containing the components of an angular velocity in the local frame, we can compute the operator that would take a derivative along a curve that admits that angular velocity. That is for any function $\varphi$ defined on $\overline{TQ}$, we have 
\begin{equation}\label{defineEa}
  E_a \varphi:= a^i\, E_{q^i} \varphi=a^i\, \pd{\varphi}{R^j_k} R^j_l (b_i)^l_k=\pd{\varphi}{R^j_k} R^j_l (a^\times)^l_k,
\end{equation}
where the last equality is due to the specific definitions of $b_i$ in~\eqref{biso3}.

\begin{rem}
Given the structure constants and regardless of other information in the Lagrangian, by substitution in~\eqref{hamflow} we already know that the Hamiltonian flow will take the form
\begin{equation} \label{hamflowso3}
  \begin{cases} 
    q' = B_i \,\c  u^i, \\ 
    p_i' = - D_{q^i} \mathcal H + (p\times \c u)_i.
  \end{cases} 
\end{equation}
\end{rem}

\textbf{Step 3.} The potential energy of the system is the sum of the elastic energy stored in the rod and the potential energy due to the height of the weight.
\[ E = \int_0^L  \underbrace{ \left\{\frac{K_{11}}{2} (\c u^1 - \widehat {\c u^1})^2 + \frac {K_{22}}2 (\c u^2 - \widehat {\c u^2})^2 +  \frac {K_{33}}2 (\c u^3 - \widehat {\c u^3})^2 + \overbrace{Mg\, \v d_3 \cdot \v e_3}^{\psi(q)} \right\}}_{\mathcal L} ~ds,\]
where $\v e_3$ is upward and unit vertical vector constant in the lab frame and $\v d_3$ is the third material director: the unit tangent to the centreline of the rod (see section~\eqref{sec-cosserat}). The last term depends on the orientation of the section but not on the strains. The precise expression of this term depends on the choice made to encode the rotation group; for now the notation $\psi(q)$ is there to indicate that it depends on the configuration but not on the quasi-velocities.

\textbf{Step 4.} Define 
\begin{equation}\label{defpso3}
p_i = E_{\xi^i} \mathcal L = K_{ij}\, (\c u^j - \widehat{\c u^j})\qquad  (\textrm{where }K\textrm{ is diagonal}).
\end{equation}

The $P$ matrix has components $P_{ij}  = K_{ij}$ and is therefore diagonal. Assuming the reference curvatures independent of arc length, we also have $Q = 0$ and $\c t = 0$. Accordingly, we have the definitions $D_{q^i} = E_{q^i}$ and $D_s = E_s$.

Since the definition~\eqref{defpso3} is precisely the strain-stress relations for unshearable and inextensible rods, we recognize that the generalized momenta are the components $\c m_i$ of $\v m$ in the material frame (see section~\ref{sec-cosserat}). To keep with standard notation, for the rest of this section we will substitute $\c m_i$ in place of $p_i$. 

In particular,  the second set of equations in~\eqref{hamflowso3} can be written as the advected derivative $\c m' + \c u\times \c m = - D_q \mathcal H.$

\textbf{Step 5.} Write the Hamiltonian function. Because~\eqref{defpso3} is easily inverted, we have 
\begin{equation} 
  \mathcal H = \c m_i \,  \widehat{u^i} + \frac {\c m_1^2}{2K_{11}} + \frac {\c m_2^2}{2K_{22}} + \frac {\c m_3^2}{2K_{33}} - \psi(q).
\end{equation}

\textbf{Step 6.} 
To write down an explicit differential system, we need to choose how we want to encode the rotation group.

\begin{enumerate}[(a)]
  \item $\mathbf{Q = SO(3).}$ Encoding with rotation matrices directly, we have  $\psi(R)  = Mg\, R_{3}^3$. So that applying~\eqref{defineEa}, and using $D_{q^i} = E_{q^i}$ since $Q =0$ and $\c t = 0$,
  \begin{align} \label{dqHso3}
    D_{q^i} \mathcal H&= -E_{q^i} \psi  = -M\,g \delta^3_j\, \delta^k_3 R^j_l (b_i)^l_k= M\,g  (\c e_3\times \c d_3)_i,
  \end{align}
  where $\c e_3$ stands for the components of the third lab frame vector expressed in the local frame and $\c d_3 = \begin{pmatrix} 0 & 0 & 1\end{pmatrix}^T$ is the component of the third material vector in the local frame.
  Substituting~\eqref{dqHso3} in~\eqref{hamflow} yields 
  \begin{equation}\label{sdso3}
    \begin{cases}
      R' = R\, \left (\c u \right )^\times, \\ 
      \c m' +\c u\times \c m + \c d_3 \times (-M g \c e_3) = 0,
    \end{cases}
  \end{equation}
  where $K$ is the bending stiffness matrix of the rod and where we recovered the Kirchhoff equations for the hanging weight (with $\v n = - M\,g\,\v e_3$ -- see section~\ref{sec-cosserat}).
  For a closed system, we can replace $\c u = \widehat{\c u}+K^{-1}\c m$ and the  $\c d_3 \times (-M g \c e_3) $ term of~\eqref{sdso3} by its penultimate expression in~\eqref{dqHso3}. 
  \item \textbf{Unit quaternions.} A detailed and wonderfully well written discussion of the link between rotations and quaternions can be found in~\cite{al86} where Rodrigues work gets its due. Here we will simply gather the key elements to enable our computation. When encoding the rotation group by quaternions, we associate a right-handed rotation of angle $\theta$ about a unit direction $\v a$ with the unit quaternions (i.e. quaternions of modulus 1)
   \begin{equation} \label{quaternionencoding}
    \pm \left(\cos\tfrac \theta 2, \sin \tfrac \theta 2 \v a\right).
  \end{equation} 
  In general, we note a quaternion $q = (q_0, \v q)$ as having a real part $q_0\in\mathbb R$ and a vector part $\v q= (q_1,\,q_2,\,q_3)\in\mathbb R^3$ (sometimes called the imaginary part of the quaternion). With those notations, if $a$ and $b$ are quaternions, the quaternion multiplication reads:
  \begin{align*}
    a\,b  = (a_0,\v a) (b_0,\v b) = (a_0\,b_0 -\v a\cdot \v b, a_0 \, \v b + b_0\, \v a + \v a\times \v b).
  \end{align*}
  
The quaternion algebra encodes rotation composition because it matches exactly the algebra of the Rodrigues parameters (see~\cite{al86}). It is also well known that along a curve in the unit quaternions $\mathbb H_1$ that is mapped to a rotation curve with angular velocity $\c u$ (that is $R' = R\, \c u^\times$), we have 
  \begin{equation}\label{quaternionderivative} 
    q' = \frac 1 2 q\, (0, \c u) ,
  \end{equation}
  where the factor $\tfrac 1 2 $ comes from the half angles in the encoding~\eqref{quaternionencoding}.

  In particular for a general function $\varphi$ defined on $\mathbb H$, we then have 
  \begin{equation}\label{buquat}
    B_u \varphi := u^i\, B_i \varphi  = u^i \pd{\varphi}{q_j}\, B_i \, q_j = \frac 1 2 \c u^i \pd{\varphi}{q_j}\, \Big(q\, (0,e_i)\Big)_j =\frac 1 2 \pd{\varphi}{q_j}\,\Big( q\, (0,\c u)\Big)_j. 
  \end{equation}
  Where $e_1,e_2,e_3$ form the canonical basis of $\mathbb R^3$.

  Finally let $q$ be a unit quaternion encoding a rotation with rotation matrix $R\in SO(3)$ and a vector of components $\v r$ in the lab frame, then we have  
  \begin{equation}\label{quaternionrot}
    (0, R\v r) = q\, (0,\v r)\, \overline q,
  \end{equation}
  where $\overline{(q_0,\v q)} = (q_0, -\v q)$ denotes the conjugate quaternion (which is also the multiplicative inverse in $\mathbb H_1$).

  With~\eqref{quaternionrot} and remembering that $\v d_3 = R\, \v e_3$, we find 
  \begin{equation}\label{quatpsi}
  \psi(q) = M\, g\, \v e_3 \cdot \v d_3 =M\, g\, (0, \v e_3)\cdot \Big( q\, (0, \v e_3)\, \overline q\Big) = M\, g\, (q_0^2 - q_1^2 -q_2^2 + q_3^2).
  \end{equation}

Applying~\eqref{quaternionderivative} yields $D_{q^i} q = \tfrac 1 2 q (0,e_i)$ and therefore
\begin{equation}\label{dqHquat}
D_{q^i} \mathcal H = - D_{q^i} \psi  = - \frac 1 2  Mg\, (0,e_3)\cdot \Big(q (0, e_i) (0,e_3)\overline q + q (0,e_3)\overline{q (0, e_i)}\Big) = - Mg\, (0, e_3)\cdot \Big(q \, (0,e_i\times e_3)\,\overline q\Big).
\end{equation}
 
Computing the quaternion multiplication, and gathering those components in a column, we find 
\begin{equation}\label{dqHquatcol}
  D_q \mathcal H =2\, Mg\, \begin{pmatrix} q_3 \, q_2 + q_0\, q_1 \\
  -q_1\, q_3 + q_0\, q_2\\
  0
  \end{pmatrix} .
\end{equation}

The full differential system~\eqref{hamflowso3} then reads 
\begin{equation}\label{sdquat}
\begin{cases} 
 & q_0'= -\frac 1 2 \v q \cdot \c u,\\ 
&\v q'  = \frac 1 2 (q_0 \c u+ \v q \times \c u),\\
&\c m' + \c u \times \c m = - D_q \mathcal H,
\end{cases}
\end{equation}
where $\c u$ and $D_q \mathcal H$ are placeholders respectively for $\widehat{\c u}+ K^{-1} \c m$ and~\eqref{dqHquatcol}.

\item \textbf{Cayley vectors.} Another way to encode rotations is to use Cayley vectors. The Cayley vector associated with a right-handed rotation of amplitude $\varphi \in (-\pi,\pi)$ and of axis along the unit vector $\v a$ is 
\[\eta  = \v a\, \tan (\varphi/2).\]
It is connected to the quaternions because the Cayley vector $\eta(q)$ of a rotation represented by the quaternion $q = (q_0,\v q)$ obeys $\eta(q) = \v q/q_0$: that is, it is the projection of the unit quaternion from the origin of the full quaternion space to the hyperplane of equation $q_0 =1$. That relation can also be inverted: a rotation of Cayley vector $\eta$ admits the unit quaternion
\begin{equation} \label{cayleytoquat}
  q(\eta)= \left (\frac{1}{\sqrt{1+ |\eta|^2}}, \frac {\eta} {\sqrt{1+|\eta|^2}}\right )
\end{equation}

Cayley vectors have the nice property that they form a chart of coordinates that almost covers the whole of $SO(3)$; only those rotations of amplitude $\pi+2 k\,\pi$ with $k\in\mathbb Z$ must be excluded. Furthermore, they are coordinates that behave well under composition. If $\eta_1$ and $\eta_2$ are Cayley vectors for two rotations, the Cayley vector for the composition of these rotations is 
\begin{equation} \label{cayleycomp}
  \frac{\eta_1 + \eta_2 + \eta_1 \times \eta_2}{1-\eta_1\cdot \eta_2},
\end{equation}
which only contain rational operations. The associated rotation matrix can be recovered via the Cayley transform: 
\begin{equation} \label{cayleytoso3}
  R(\eta) = (I+\eta^\times)(I-\eta^\times)^{-1}.
\end{equation}

A Cayley vector $\eta$ rotates a vector of components $\v b$ in lab frame to a vector of components 
\begin{equation}\label{cayleyact}
  R(\eta)\v b = \frac{(1-|\eta|^2)\,{\v b} + 2(\eta\cdot {\v b})\,\eta + 2\,\eta\times {\v b}}{1+|\eta|^2}
\end{equation}
in the lab frame.

Finally along a curve of quasi-velocity $\c u$, we have 
\begin{equation} \label{cayleyder}
   \eta' =\frac12\Big({\c u} + \eta\times {\c u} + (\eta\cdot {\c u})\,\eta\Big).
\end{equation}

Therefore, using~\eqref{cayleyact} to express $\v d_3 = R\,\v e_3$ in the definition of $\psi$, we get 
\begin{equation} 
\psi(q) = Mg\, \frac {1 -\eta_1^2 - \eta_2^2 + \eta_3^2}{1+\eta_1^2 + \eta_2^2 + \eta_3^2}.
\end{equation}
Using this last expression to infer $D_q\mathcal H$ would in truth be tedious. But we can use the fact that we have already done the job in~\eqref{dqHquatcol} and we can translate it using~\eqref{cayleytoquat}: 
\begin{equation} 
  D_q\mathcal H  = \frac{2\,Mg}{1+ |\eta|^2} \begin{pmatrix} 
\eta_3\, \eta_2 + \eta_1\\
-\eta_1\,\eta_3 +\eta_2\\
0
  \end{pmatrix}.
\end{equation}

The complete differential system~\eqref{hamflowso3} then reads 
\begin{equation} \label{sdcayley}
  \begin{cases}
  \eta' = \frac12\Big({\c u} + \eta\times {\c u} + (\eta\cdot {\c u})\,\eta\Big),\\
  \c m' + \c u\times \c m = - D_q \mathcal H,
  \end{cases}
\end{equation}
where once again $\c u$ and $D_q\mathcal H$ are placeholders.

\begin{rem}
Note the path not taken. When working with coordinates, standard Hamiltonian calculus requires expressing $\mathcal L(\eta, \eta')$ instead of $\mathcal L(\eta, \c u)$ and defining the conjugated momenta $p_i = \pd{\mathcal L}{\eta_i'}$. Those who have tried know that the ensuing momenta are unrecognizable and that is to say nothing of what would have happened had we chosen Euler-angles rather than Cayley vectors. Instead, we engage on the path trodden by Poincaré and Hamel in the Lagrangian picture, and we explore its Hamiltonian staging post: the Hamiltonian function, the associated Poisson bracket and its bivector $\mathcal J$ are expressed within a framed space adapted to the structure of the Lie group. The
rewards of the detour are that the momenta we work with are physically explicit, and that the frame changes are linear even when the Legendre transform is not. 
\end{rem}
\end{enumerate}

\begin{rem}
The three encodings above are not three theories but three different strategies for describing the one space $\overline{TQ}$. The objects that carry the physics---the form
$\Theta$, the bivector $\mathcal J$, and the Hamiltonian $\mathcal H$---are defined on
$\overline{TQ}$ itself and never consult the encoding: rotation matrices, unit
quaternions, and Cayley vectors are interchangeable, and the construction
is indifferent to the choice. This is why the same momenta $\c m$ and the
same equilibrium equations emerged in all three. The encoding is a convenience for
writing the functional down; it is not part of the structure.
\end{rem}

Of course, none of the equations~(\ref{sdso3},\ref{sdquat},\ref{sdcayley}) are new. The point is precisely to show that the Hamiltonian recipe of section~\ref{sec-steps} gets us from the potential energy to the Kirchhoff equations with indecent ease.

\section{Two momenta, one bivector\label{sec-interact}}

\subsection{Decomposed momenta, mixed bivector frames\label{sec-splittheory}}

When dealing with an interaction that depends on the strains, the energy and the momenta are decomposed along subsystem lines; the bivector is not decomposed but expressed asymmetrically, its two factors projected in different frames.

Throughout this section we work in the rotation group $Q=SO(3)$, framed as in
section~\ref{sec-singlerod}: the structure constants are those of the
cross-product~\eqref{struct_cross}, the quasi-velocities $\xi^i=\c u^i$ are the
body-frame angular velocities, and the Kirchhoff momenta are
$\c m_i = K_{ij}(\c u^j - \widehat{\c u}^j)$, with $K$ the (invertible) bending
stiffness matrix.

Because energy is an additive quantity, there is a class of rod problems for which the energy density of the structure can be separated as 
\begin{equation}\label{interactinglag}
  \mathcal L = \overbrace{\frac 1 2 \left (\c u-\widehat{\c u}\right ) \cdot K\left (\c u-\widehat{\c u}\right )}^W + \mathcal L_1(q,\,\c u),
\end{equation}
 where $W$ is the elastic energy density in a standard Kirchhoff rod and where $\mathcal L_1(q,\c u)$ is an interaction term that also depends on $\c u$.

As an example, we will consider in section~\ref{sec-tendon} an elastic rod equipped with a tendon, that is an inextensible string that runs down the length of the rod and is set at a fixed offset $a$ from the centreline. An operator can apply a tension $T$ to the string resulting in the rod bending and twisting to accommodate some retraction of the tendon. 

Here we establish the general result. But before we dive in, note that because $\mathcal L_1$ depends on the strains, this situation destroys our identification of $p_i := E_{\c u^i} \mathcal L$ with the moment $\c m$ of Kirchhoff theories which would be $\c m_i = E_{\c u^i} W$. Worse, in many cases and it will be true for our tendon example, the relations between $p_i$ and the quasi-velocities $\c u^i$ become nonlinear\footnote{We have seen above one way of handling this, but here we can do better.}.

The functions $\c m_i$ on $\overline{TQ}$ are defined by $\c m_i = E_{{\c u}^i}W$. Because stiffness matrices are typically invertible, we do have that $(\theta^1,\cdots , \theta^n,d{\c m}_1,\cdots, d{\c m}_n, ds )$ forms a coframe on $\overline{TQ}$ and that the functions $\c m_i$ provide coordinates on every fibre. That coframe has its own dual frame $(C_{q^1},\cdots, C_{q^n},C_{\c m}^1,\cdots, C_{\c m}^n, C_s)$. The algebra allowing to change basis from the $E$ frame defined in section~\ref{sec-frame} and the $C$ frame that was just introduced is similar to that of section~\ref{sec-framechange}. 

The idea is not however to make a full change of frame where we would reexpress the whole theory in the $C$ frame. Indeed, we have shown that the Hamiltonian structure does not depend on our choices of coordinates, the functions that appear in proposition~\ref{prop-pforced} are the $p_i$ and not the $\c m_i$. But the Poisson bracket is defined by a bivector $\mathcal J$ that we wrote as a sum of tensor products. The idea is therefore rather to express the left factors of $\mathcal J$ in the $C$ frame (a linear change of frame) and to keep the right factors expressed in the $D$ frame. This is because in the Poisson bracket, the right factors act on the Hamiltonian function and we already know that we always have $D_p^i\mathcal H = \c u^i$ and it would be unfortunate to obfuscate such a simple relation behind an ill-adapted frame.

Beyond the new way of expressing $\mathcal J$, the quasi-velocities $\c u^i$ and indeed the momenta $p_i$ become explicit functions of the $\c m_j$. To this end, we also define $l_i = E_{{\c u}^i} \mathcal L_1$ so that by definition $p_i = E_{{\c u}^i} \mathcal L = \c m_i+ l_i$. In general we have nonlinear expressions of the form $l_i(q,\c u)$ but since the quasi-velocities $\c u$ are affine functions of the internal stresses $\c m$, we can express $p_i =\c m_i + l_i(q,\c m_i).$ Our Poisson bracket is then ready to produce the evolution equation of $q$ and $\c m_i$, or for that matter of any other quantities of interest, along the length of a rod at equilibrium.

\begin{prop}
Consider a rod problem with configuration $R\in SO(3)$ and a potential energy density of the form
\begin{equation}
\mathcal L(R,\c u,s) =  \overbrace{\frac 1 2 \left (\c u-\widehat{\c u}(s)\right ) \cdot K\left (\c u-\widehat{\c u}(s)\right )}^{W(\c u)} +\mathcal L_1 (R,\c u, s)
\end{equation} and such that $P_{ij} = E_{\xi^i}E_{\xi^j} \mathcal L$ and $K$ are  invertible matrices. Define also
\begin{align} 
   \c m_i &= E_{\xi^i} W,&
    l_i &= E_{\xi^i} \mathcal L_1,&
    L_{ij} &= E_{\xi^i}E_{\xi^j} \mathcal L_1,&
    Q_{ij} &= E_{q^i}E_{\xi^j}\mathcal L_1,&
    \widetilde{\c t} &= - K{\widehat{\c u}}'&
    \c t_i &= E_s E_{\xi^j} \mathcal L_1 + \widetilde{\c t}_i. 
\end{align}
Then $p_i = \c m_i + l_i$ and the matrix-valued function ${\mathcal G}(q,m)=(I+ K^{-1} L)^{-1}$ on $\overline{TQ}$ is such that the Hamiltonian flow of the problem is fully defined by 
\begin{equation} \label{hamflowinteract}
  \begin{cases} 
    R' &= R \,(\widehat{\c u} + K^{-1} \c m)^\times,\\ 
    \c m_i' &=  \widetilde{\c t}_i - \Big( (\c u\times p)_j -E_{q^j} \mathcal L_1 - \c m_k (Q{\mathcal G}K^{-1})^k_j +\c u^k Q_{kj} + \c t_j \Big ){\mathcal G}_i^j.
  \end{cases}
\end{equation} 
\end{prop}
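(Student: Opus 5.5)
The plan is to start from the general Hamiltonian flow~\eqref{hamflow}, already specialised to $SO(3)$ in~\eqref{hamflowso3}, and change only the fibre variable from $\xi=\c u$ to $\c m$. This is done linearly, through the chain rule along the equilibrium curve.

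First, the decomposition $p_i=\c m_i+l_i$ is immediate from linearity of $E_{\xi^i}$ applied to $\mathcal L=W+\mathcal L_1$. Likewise $P=K+L$, so $P=K(I+K^{-1}L)$ and $P^{-1}=\mathcal G K^{-1}$. The $E_{q^i}$ and $E_s$ derivatives of $E_{\xi^j}W$ are, respectively, zero and $-K\widehat{\c u}'=\widetilde{\c t}$, because $W$ does not depend on $R$. Hence the $Q$ and $\c t$ of section~\ref{sec-framechange} for the full $\mathcal L$ are exactly the $Q$ and $\c t$ defined in the statement. The first equation of~\eqref{hamflowinteract} is then just $R'=R\,\c u^\times$ (from $q'=B_i\xi^i$ and~\eqref{defineEa}) combined with the inversion $\c u=\widehat{\c u}+K^{-1}\c m$ of $\c m=K(\c u-\widehat{\c u})$.

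For the second equation I will not use the $C$-frame bivector directly. Along the flow, $\c m=K(\c u-\widehat{\c u}(s))$ gives $\c m'=K\c u'+\widetilde{\c t}$. So I need ${\c u}'$, which is supplied by the third line of~\eqref{hamflow}:
\[
{\c u^i}'=-\big(D_{q^j}\mathcal H+(p_ac^a_{jk}+Q_{kj})\c u^k+\c t_j\big)(P^{-1})^{ji}.
\]
Three computations then remain. The first is $D_{q^j}\mathcal H$. From~\eqref{computeDframe}, $D_{q^j}=E_{q^j}-Q_{jk}(P^{-1})^{kl}E_{\xi^l}$. Together with $E_{\xi^l}\mathcal H=\xi^mP_{ml}$ (because $E_{\xi^l}\mathcal H=E_{\xi^l}(p_m\xi^m)-p_l$) and $E_{q^j}\mathcal H=\xi^mQ_{jm}-E_{q^j}\mathcal L_1$, this gives $D_{q^j}\mathcal H=\c u^mQ_{jm}-\c u^mQ_{jm}-E_{q^j}\mathcal L_1=-E_{q^j}\mathcal L_1$. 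The second is the structure-constant term, which by~\eqref{struct_cross} becomes $(p\times\c u)$ up to sign, i.e.\ the $(\c u\times p)_j$ appearing in the statement. The third step multiplies by $K$ and uses $P^{-1}K=(K^{-1}P)^{-1}=\mathcal G$. Here the index placement, row index up in $\mathcal G^i_j$, has to be reconciled with $(P^{-1})^{ji}$. I would expect $K(P^{-1})$ versus $P^{-1}K$ to be fixed by symmetry of $P$ and $K$, so that $K P^{-1}=\mathcal G^T$-type identities give the contraction $(\cdots)_j\mathcal G^j_i$.

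The main obstacle will be matching the precise form in the statement. The extra term $-\c m_k(Q\mathcal GK^{-1})^k_j$ and the way $Q$ is split into a $\c u^kQ_{kj}$ piece suggest that the author kept $p$ in the cross product but rewrote part of $D_{q}\mathcal H$ using the mixed $C$/$D$ frame, rather than cancelling it fully as in my computation above. So I will redo $D_{q^j}\mathcal H$ with the left factor in the $C$ frame. There, $C_{q^j}=E_{q^j}$, since $\c m$ does not depend on $q$, while the right factor stays in the $D$ frame. I will track how $Q P^{-1}=Q\mathcal GK^{-1}$ contracts with $E_\xi\mathcal H$ expressed through $\c m$ and $l$. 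I expect the discrepancy to reduce to the identity $\xi^mP_{ml}=\c u^m(K+L)_{ml}$ rewritten via $\c m$. If the two forms still disagree, I will take my cancelled version as correct and check it against the tendon example, where $L$ has rank one.
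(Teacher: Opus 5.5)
Your route differs from the paper's and is the more economical one. The paper rewrites the left factors of $\mathcal J$ in the $C$ frame dual to $(\theta^i,d\c m_i,ds)$ and evaluates $\mathcal J(d\c m_i,d\mathcal H)+D_s\c m_i$. You instead differentiate $\c m=K(\c u-\widehat{\c u})$ along the flow and insert the $\xi'$ line of \eqref{hamflow}, so no $C$ frame is needed. The paper's device buys the evolution of any function of $(R,\c m)$ through the bracket; yours delivers only the $\c m$ equation, which is all the statement asks.

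Your steps are correct, and your two hedges resolve cleanly: $p_ac^a_{jk}\c u^k=(\c u\times p)_j$ exactly, and $(P^{-1})^{jl}K_{li}=(P^{-1}K)^j{}_i=\mathcal G^j_i$ using only the symmetry of $K$. In particular $D_{q^j}\mathcal H=-E_{q^j}\mathcal L_1$ is right, and it is frame-free, since $d\mathcal H=\xi^i\,dp_i-(E_{q^i}\mathcal L)\,\theta^i-(E_s\mathcal L)\,ds$. Carried through, your argument proves
\begin{equation*}
\c m_i'=\widetilde{\c t}_i-\Big((\c u\times p)_j-E_{q^j}\mathcal L_1+\c u^kQ_{kj}+\c t_j\Big)\mathcal G^j_i,
\end{equation*}
which is the printed formula without the term $-\c m_k(Q\mathcal GK^{-1})^k_j$.

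The gap is your last paragraph: the planned reconciliation cannot succeed. $D_{q^j}\mathcal H$ is the derivative of a fixed function along a fixed vector field, so re-framing the other slot of $\mathcal J$ cannot change it. A check on the tendon rod cannot settle the matter either, because there $Q=0$ and the disputed term vanishes.

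The extra term comes from the paper's computation \eqref{DqHinteract}, which writes $D_{q^j}\mathcal H=p_iD_{q^j}\xi^i-D_{q^j}\mathcal L_1$ and drops $-D_{q^j}W$. Since $D_{q^j}=E_{q^j}-Q_{jk}(P^{-1})^{kl}E_{\xi^l}$ has a fibre component, $-D_{q^j}W=Q_{jk}(P^{-1})^{kl}\c m_l$, and this cancels exactly the $\c m$-term retained there. The rest of the paper's computation, \eqref{splitJ} and $D_s\c m_i=\widetilde{\c t}_i-\c t_j\mathcal G^j_i$, agrees with yours. So the statement as printed holds only when that term vanishes, e.g.\ when $Q=0$, which covers the tendon application. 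You should commit to the corrected equation rather than try to match the printed one.

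A decisive test is the null Lagrangian $\mathcal L_1=\tfrac{d}{ds}h(R)=(B_kh)\,\c u^k$ with constant $\widehat{\c u}$. Then $l_j=B_jh$, $L=0$, $\mathcal G=I$, $Q_{kj}=B_kB_jh$ and $\widetilde{\c t}=\c t=0$. Moreover $-E_{q^j}\mathcal L_1+\c u^kQ_{kj}=\c u^k[B_k,B_j]h=(l\times\c u)_j$, which cancels the $l$-part of $\c u\times p$. Your formula therefore returns $\c m'+\c u\times\c m=0$, as it must. The printed one instead adds $\c m_k(QK^{-1})^k_i$ to the right-hand side, which is nonzero in general: for $h=R^3_3$, one has $Q=\mathrm{diag}(-1,-1,0)$ at $R=I$.
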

\begin{proof} 
  First note that by definition $\c m_i = K_{ij} (\c u^j - {\widehat{\c u}}^j)$ which can be inverted to 
  \begin{equation}\label{uofm}
    \c u = \widehat{\c u} +K^{-1} \c m.
  \end{equation} So any function of $(R,\c u)$ can be written as a function of $(R,\c m)$ by directly substituting $\c u$ according to~\eqref{uofm}.

  We have already seen that in $SO(3)$, applying~\eqref{evolve} to $R$ yields 
  \begin{equation} 
    R' = \{R,\mathcal H\} + D_s R = \{R,\mathcal H\} = R (\c u)^\times \stackrel{\eqref{uofm}}= R\,(\widehat{\c u} + K^{-1} \c m)^\times,
  \end{equation}
  which is the first equation in~\eqref{hamflowinteract}.

  Next we establish the relation between the $C$ frame defined before the proposition as the dual frame to $(\theta^1,\cdots,\theta^n,\c m_1,\,\cdots,\, \c m_n, ds)$ and the $E$ frame defined in section~\ref{sec-frame} by following the same steps as in section~\ref{sec-framechange}. We define the matrices $\widetilde P_{ij} = E_{\xi^i}E_{\xi^j} W = K_{ij}$ and $\widetilde Q_{ij} = E_{q^i} E_{\xi^j}  W = 0$ and the line $\widetilde {\c t}_i = E_s E_{\xi^i} W = - K_{ij} (\widehat{\c u}^j)'$. Then applying~\eqref{computeDframe} for the case of our $C$ frame, we get
\begin{align}
  \begin{pmatrix} 
    E_{q}\\
    E_\xi\\
    E_s 
  \end{pmatrix} = \begin{pmatrix} I &0 &0 \\ 0& K &0 \\ 0 &\widetilde {\c t} & 1 \end{pmatrix} 
  \begin{pmatrix}
    C_q\\
    C_m\\
    C_s 
  \end{pmatrix} \qquad \textrm{so that}\qquad 
  \begin{pmatrix} 
    D_q \\ D_p \\ D_s \end{pmatrix}
    & \stackrel{\eqref{computeDframe}}{=} \begin{pmatrix} 
      I & - QP^{-1} & 0 \\ 
      0 &  P^{-1} & 0 \\
      0 & -\c t P^{-1} & 1\end{pmatrix} 
       \begin{pmatrix} I &0 &0 \\ 0& K &0 \\ 0 &\widetilde {\c t} & 1 \end{pmatrix} 
  \begin{pmatrix}
    C_q\\
    C_m\\
    C_s 
  \end{pmatrix}\\
  &= 
   \begin{pmatrix} I &-Q{\mathcal G} &0 \\ 0& {\mathcal G} &0 \\ 0 &\widetilde{\c t}-\c t {\mathcal G} & 1 \end{pmatrix} 
  \begin{pmatrix}
    C_q\\
    C_m\\
    C_s 
  \end{pmatrix}, \label{dtocc}
\end{align}
where we used 
\begin{align}\label{Gintroduced}
 P^{-1} K = (K+L)^{-1} K = \big(K (I+ K^{-1}L)\big)^{-1} K = (I+ K^{-1}L)^{-1}=:{\mathcal G}.
\end{align}

 The bivector $\mathcal J$ defined in~\eqref{defJ} and reproduced here for convenience can therefore be expressed as 
  \begin{align}
  \mathcal J& = D_{q^i}\otimes D_{p}^i - D_p^i\otimes D_{q^i}  + p_a \, c^a_{ij} D_p^j\otimes D_p^i,\\
& = C_{q^i} \otimes D_p^i - {\mathcal G}^i_k \,C_m^k\otimes D_{q^i}  + (p_a \, c^a_{ij}-Q_{ij}) {\mathcal G}^j_k\,  C_m^k\otimes D_p^i, \label{splitJ}
  \end{align}
  where we ought to remember that $Cs$ and $Ds$ are vector fields while $Q$ and ${\mathcal G}$ are matrix-valued functions. As promised the left factors are now written in the $C$ basis (because they are intended to act on functions of $(R,\c m)$) and the right factors are written in the $D$ basis because they are intended to act on the function $\mathcal H(q(R),p)$.

  As a last preparation, the Hamiltonian function is 
  \begin{equation}
    \mathcal H = p_i \, \xi^i - W- \mathcal L_1, 
  \end{equation}
  so that 
  \begin{align}
  D_{q^j} \mathcal H &= p_i \, D_{q^j} \xi^i - D_{q^j} \mathcal L_1,\nonumber\\
  &=-p_i  (QP^{-1})^i_j -E_{q^j} \mathcal L_1 + (QP^{-1})^k_j l_k,\nonumber\\
&=- \left(C_{q^j}\mathcal L_1 + \c m_i (Q{\mathcal G}K^{-1})^i_j\right).\label{DqHinteract}
  \end{align}

  We are now ready to compute 
  \begin{align}
  \c m_i' &\stackrel{\eqref{evolve}}= \{ \c m_i,\mathcal H\} + D_s \c m_i  \stackrel{(\ref{defbra},\ref{dtocc})}=\mathcal J(d\c m_i,d\mathcal H)+ \left (\widetilde {\c t}-\c t {\mathcal G}\right )_i,\nonumber\\
  &\stackrel{(\ref{splitJ},\ref{rDpH})}=- {\mathcal G}_i^j (D_{q^j}\mathcal H) + \left (p_a c^a_{kj}-Q_{kj}\right ) {\mathcal G}^j_i \c u^k +\widetilde{\c t}_i -\c t_j {\mathcal G}^j_i,\nonumber\\
  &= \widetilde t_i - \Big(D_{q^j}\mathcal H+ (\c u\times p)_j+ \c u^k Q_{kj} +\c t_j \Big)\, {\mathcal G}^j_i,\nonumber\\
  &\stackrel{\eqref{DqHinteract}}= \widetilde{\c t}_i - \Big( (\c u\times p)_j -E_{q^j} \mathcal L_1 - \c m_k (Q{\mathcal G}K^{-1})^k_j +\c u^k Q_{kj} + \c t_j ){\mathcal G}_i^j.
  \end{align}

\end{proof}

\begin{rem}
The ensuing system may seem involved, but we are paying for generality. In many practical cases we will have $\widetilde{\c t} =\c t = 0$ and/or $Q=0$ in which case the system considerably simplifies. 
\end{rem}
\begin{rem}
   The expected $\c m' + \c u\times \c m$ does not appear explicitly. But it is in fact hidden in $\c u\times p = \c u \times (\c m + l)$. Yes, it's multiplied by $\mathcal G$, but we can always set $\mathcal G = I+ A$ and the $\c u\times \c m$ term will become explicit at the cost of a more cumbersome general expression. 
\end{rem} 
\begin{rem}
   The matrix $\mathcal G$ can be computed fully by rational operations only. By the Cayley-Hamilton theorem trick, for any matrix $M$ such that $\det(I+M)\neq0$,
  $$(I+ M)^{-1} = \frac {M^2 -(I_1+1) M + (I_1+ I_2) \textrm{Id}}{1+I_1+ I_2-I_3},$$
  where $I_1 = \tr M$, $I_2 = ((\tr M)^2-\tr (M^2))/2$ and $I_3  = \det M$ are the matrix invariants of $M$. In the example to follow, we will see that for that problem, much can be gained by studying the matrix $(K^{-1}L)$ in some detail.
\end{rem}

\subsection{The application\label{sec-tendon}}

\begin{figure}[t]
\begin{center}
\includegraphics[width=0.49\textwidth]{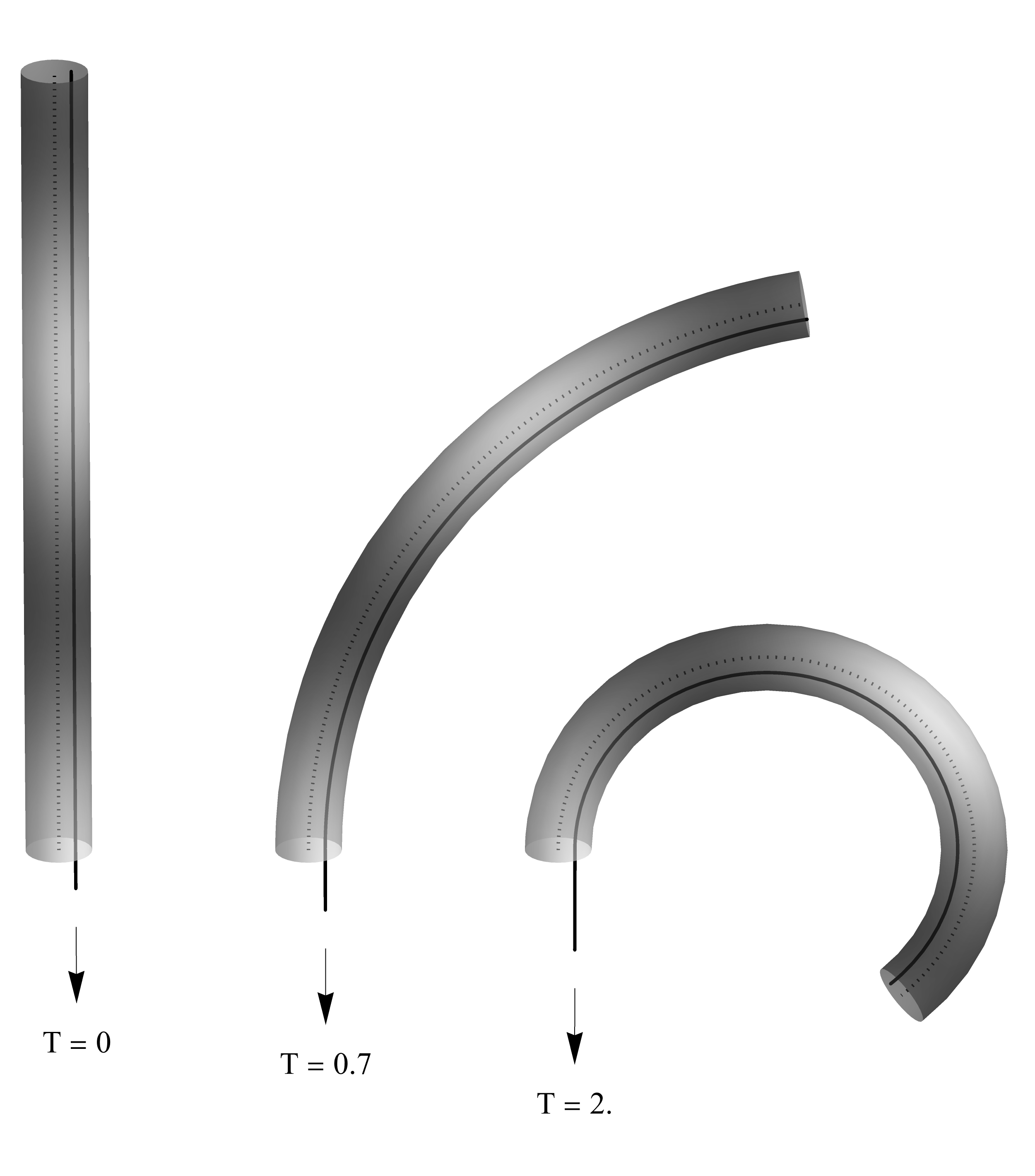}\hfill
\includegraphics[width=0.49\textwidth]{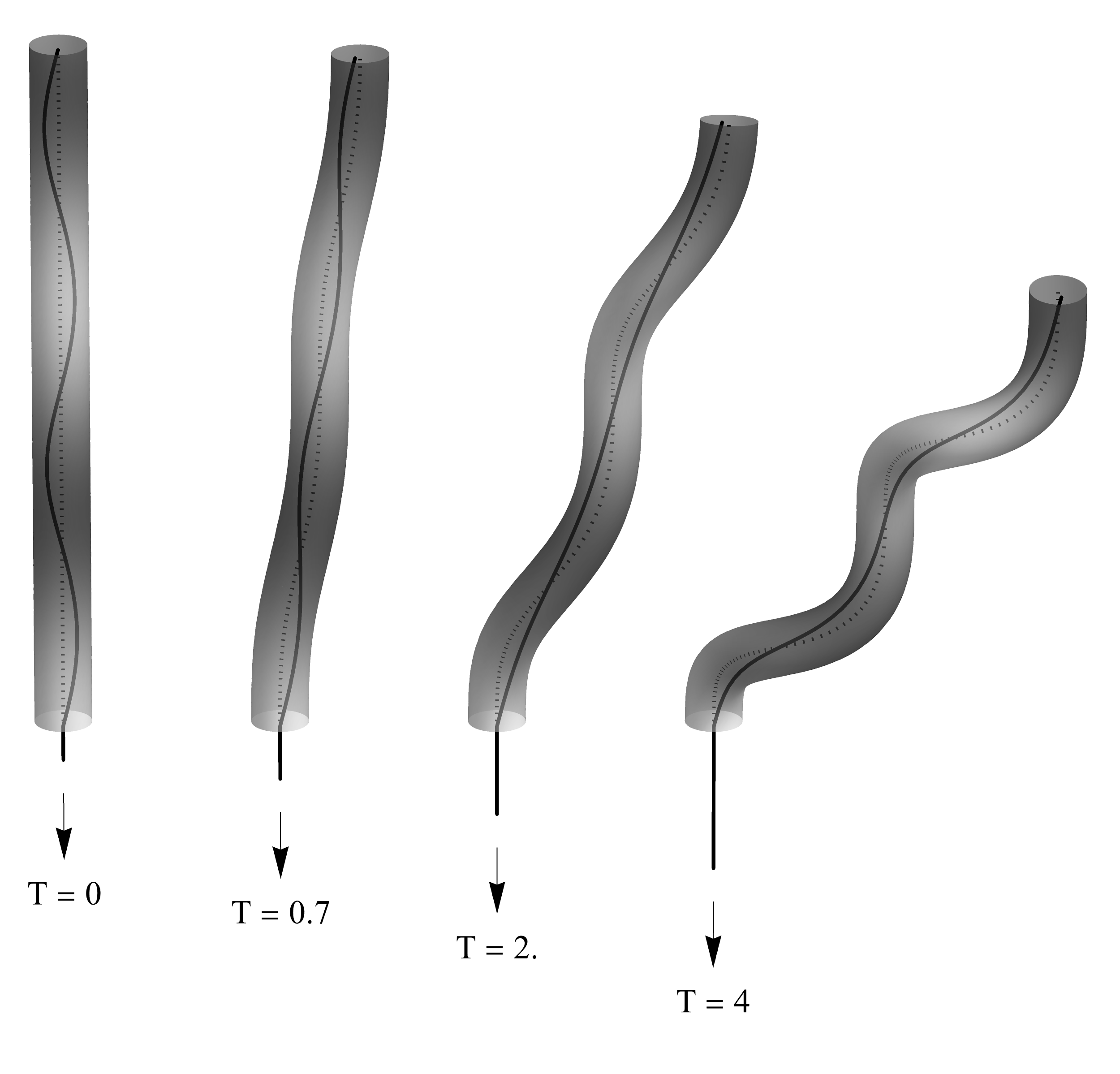}
\caption{
Equilibrium configurations of a straight rod actuated by a tendon ($K_{11}=K_{22}=1,\, K_{33}=0.8,\, a=0.8,\, L = 10.$). The centreline of the flexible rod is shown as a dotted line, the tendon is the thicker black line. (left) Straight reference configuration ($\widehat{\c u}^1=\widehat{\c u}^2=\widehat{\c u}^3=0$). (right) Twisted reference configuration ($\widehat{\c u}^1=\widehat{\c u}^2=0,\, \widehat{\c u}^3=4\pi/L$). Note that the tendon within the rod straightens up first (around $T=2$) but it can be further withdrawn by bending the structure for higher values of the tension $T$.}\label{fig-straight}
\end{center}
\end{figure}

An unshearable and inextensible Kirchhoff rod with a diagonal bending stiffness matrix $K$ is equipped with a tendon. That tendon is an inextensible string --a rod with zero bending stiffness-- inserted in a tunnel within the rod at a distance $a$ from the centreline of the rod along the $\v d_1$ director. It enters the rod at $s=0$ runs along the sections $[0,L)$ where it is free to slide and has its tip fastened to the section at $s = L$. By pulling on it at the entrance $s=0$, an operator can apply a tension $T$ to the tendon. 

Our aim is to display the program of section~\ref{sec-splittheory} at work on a nontrivial example. It is not to provide a systematic study of tendon actuated robots. In particular, we assume that there is no friction between the tendon and the elastic rod. Detailed studies of this system are available for instance in~\cite{bole23} from the viewpoint of optimal control. They work with the left-invariant frame on the Lagrangian side but do not develop the Hamiltonian picture. A different approach, based on balancing the inner forces within the system is explored in~\cite{ruwe14}.

The position $\v r_t(s)$ of the tendon passing section $s$ is
\begin{equation} \label{defrt}
  \v r_t  = \v r + a\, \v d_1,
\end{equation} 
and the element of arc length along the tendon line is 
\begin{equation} \label{dtendon}
\Vert \v r_t'\Vert ds = \Vert \v d_3 + a\, \v u \times \v d_1 \Vert ds = \Vert (1-a\,\c u^2) \v d_3 + a\,\c u^3\v d_2\Vert ds = \sqrt{(1-a\c u^2)^2 + (a\,\c u^3)^2} ds,
\end{equation}
where $'$ denotes derivative w.r.t. $s$ the arc length along the centreline of the Kirchhoff rod.

The potential energy of the system can therefore be written as 
\begin{align} 
  E &= \int_0^L \Big\{\frac 1 2 (\c u-\widehat{\c u}) \cdot K (\c u-\widehat{\c u}) + T\,\Vert \v r_t'\Vert \Big\} ds,\nonumber \\
&=\int_0^L \underbrace{\Big\{\underbrace{\frac 1 2 (\c u-\widehat{\c u}) \cdot K (\c u-\widehat{\c u})}_W + \underbrace{T\,\sqrt{(1-a\c u^2)^2 + (a\,\c u^3)^2}}_{\mathcal L_1} \Big\}}_{\mathcal L} ds,\label{entendon}
\end{align}
where in general we allow for $\widehat {\c u}$ to depend explicitly on $s$ encoding a general reference shape of the Kirchhoff rod.

Because, we can cast the potential energy of the system as a function on the extended tangent bundle of the rotation group, \textbf{Step 1.} and \textbf{Step 2.} of the recipe in section~\ref{sec-steps} were already taken care of in section~\ref{sec-singlerod}. We keep the same framing and so we have the same structure constants.

By definition, $p_i = E_{\xi^i} \mathcal L$ but as per the plan established in section~\ref{sec-splittheory}, we also define 
\begin{align}\label{defmtendon}
  \c m_i &= E_{\xi^i} W = K_{ij}(\c u^j -\widehat {\c u}^j),&
  l_i & = E_{\xi^i} \mathcal L_1. 
\end{align} 
Then we establish the relation between the $C$ framed defined in section~\ref{sec-splittheory} and the $E$ frame defined in section~\ref{sec-frame}. Here, we have
\begin{align} \label{Qttendon}
  Q &= 0,& \widetilde {\c t}_i &= \c t_i = -K_{ij} {\widehat{\c u}^{j\prime}}.
\end{align}

Next, we need to compute the $\mathcal G = (I+ K^{-1}L)^{-1}$ matrix. To express $L_{ij}=E_{\xi^i}E_{\xi^j}\mathcal L_1=E_{\xi^j} \, l_i$ explicitly, we prepare 
\begin{align} \label{defLambda}
  \mathcal L_1 &= T \, \underbrace{\sqrt{(1-a\,\c u^2)^2 + (a\,\c u^3)^2}}_{\Lambda(\c u)},& 
  \lambda_1 & = 0,&
  \lambda_2&= 1-a\,\c u^2,&
  \lambda_3&=a\, \c u^3.
\end{align}
Then we can compute 
\begin{align}\label{defltendon}
l_1 &= E_{{\xi^1}}\mathcal L_1 = 0,&
l_2 &= E_{\xi^2}\mathcal L_1 =T\pd{\Lambda}{\c u^2}= -a\, T\frac{\lambda_2}{\Lambda},&
l_3 &= E_{\xi^3}\mathcal L_1=T\pd{\Lambda}{\c u^3}= a\, T\frac{\lambda_3}{\Lambda}.
\end{align}
Before we compute the second derivatives, remark that the $l_i= E_{\xi^i} \mathcal L_1$ have a physical meaning too. They are the moment of force w.r.t. the rod's centreline $\v r$ generated by the tension in the tendon. To see it, remark that the unit tangent to the tendon $\v r_t$ is $\v t = \v r_t'/\Vert \v r_t'\Vert  =( \v d_3+ a\v u\times \v d_1)/\Lambda$. So the force transmitted by the tendon across a section $s$ is therefore $ T\v t$ and it generates a moment $\boldsymbol \mu$ with respect to $\v r$ given by 
\begin{align*}\boldsymbol \mu = a\v d_1 \times (T\v t) = \frac {a \, T}{\Lambda}\, \v d_1 \times (\v d_3 +a \v u\times \v d_1) = l_1\, \v d_1+ l_2\,\v d_2 + l_3 \, \v d_3,
\end{align*}
so that $l$ is the column of the components of $\boldsymbol \mu$ in the material frame.

For the second derivatives, because the $E_{\xi^i}$ vector fields commute we need only compute 
\begin{align}\label{Lcomps}
L_{22} &= \pd{l_2}{\c u^2} = \frac{a^2\, T}{\Lambda} \left ( 1- \frac {\lambda_2^2}{\Lambda^2}\right),&
L_{32} &= \pd{l_2}{\c u^3} = \frac{a^2\, T}{\Lambda} \,\frac{\lambda_2\, \lambda_3}{\Lambda^2},&
L_{33} &= \pd{l_3}{\c u^3} = \frac{a^2\, T}{\Lambda}\, \left ( 1- \frac {\lambda_3^2}{\Lambda^2}\right),\\
&=\frac {a^2\, T}{\Lambda^3} \lambda_3^2=\frac{l_3^2}{T\,\Lambda}, & 
&=-\frac{l_2\,l_3}{T\Lambda} & 
&=\frac {a^2\, T}{\Lambda^3} \lambda_2^2 =\frac{l_2^2}{T\Lambda},
\end{align}
where the second line comes because $\Lambda^2 = \lambda_2^2+ \lambda_3^2$. 

This can be written in a tidy way by gathering  $l = \begin{pmatrix} l_1 & l_2 & l_3\end{pmatrix}^T$ and noticing that 
\begin{equation}\label{Lexpressed} 
  L = \frac 1 {\mathcal L_1} (\c d_1\times l) \otimes (\c d_1 \times l)\qquad \textrm{and}\qquad
   K^{-1}L = \frac 1 {\mathcal L_1}\, \big(K^{-1}(\c d_1\times l) \big ) \otimes (\c d_1\times l) = \frac 1 {K_{22}\, K_{33}\mathcal L_1}\, (\c d_1\times Kl) \otimes (\c d_1\times l),
\end{equation}
where in keeping with our sans-serif convention, we have $\c d_1:=\begin{pmatrix} 1 & 0 &0 \end{pmatrix}^T$. 

Also, by summing diagonal components in~\eqref{Lcomps} we compute
\begin{align}\label{thetrace}
  \tr K^{-1}L = \frac {a^2 T} {K_{22} K_{33}\,\left (\lambda\cdot \lambda\right)^{3/2}} \lambda\cdot K\lambda = \frac{1}{K_{22}\,K_{33}\,\mathcal L_1} l\cdot Kl.
\end{align}
where $\lambda = \begin{pmatrix} \lambda_1 & \lambda_2 & \lambda_3\end{pmatrix}^T$.

Notice that $L$ has a block diagonal structure with a $1\times1$ block that is 0 and $2\times 2$ block that has zero determinant. Accordingly, with $M_2$ the non-trivial $2\times 2$ block of $K^{-1} L$, we have $\det M_2 = 0$ and
\begin{align}\label{Gtendonblock}
  \mathcal G = \left (I_{3\times3}+ K^{-1}L\right )^{-1} = 
  \begin{pmatrix} 1 & 0 \\
    0& I_{2\times 2} + M_2\end{pmatrix}^{-1} = 
    \begin{pmatrix} 1 & 0 \\ 0 & (I_{2\times2} + M_2)^{-1} \end{pmatrix}
\end{align} 
The $2\times 2$ matrix can be inverted by Cayley-Hamilton theorem trickery: 
\begin{align*} 
(  I_{2\times 2} + M_2)^{-1} = I_{2\times2} - \frac 1 {1+\tr M_2} M_2,
\end{align*} 
where we used $\det M_2 =0.$
In conclusion, we have 
\begin{align} \label{Gtendon}
  \mathcal G = I_{3\times3} - \frac 1 {1+ \tr K^{-1}L} \, K^{-1} L=I-\frac 1 {K_{22}K_{33} \mathcal L_1+ l\cdot K l}\, (\c d_1 \times Kl)\otimes(\c d_1 \times l).
\end{align}

\begin{figure}[t]
\begin{center}
\includegraphics[width=0.49\textwidth]{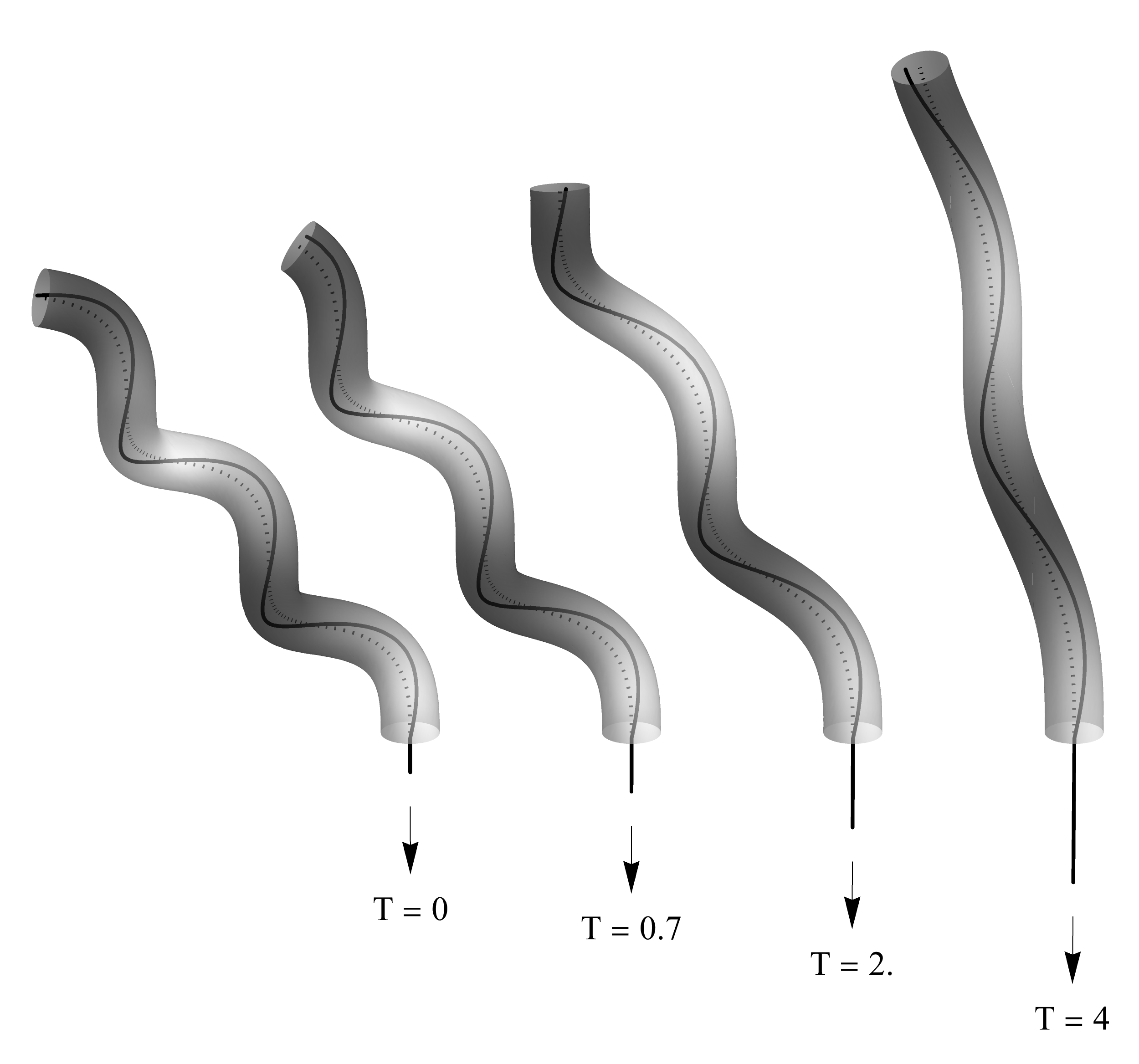}\hfill
  \includegraphics[width=0.49\textwidth]{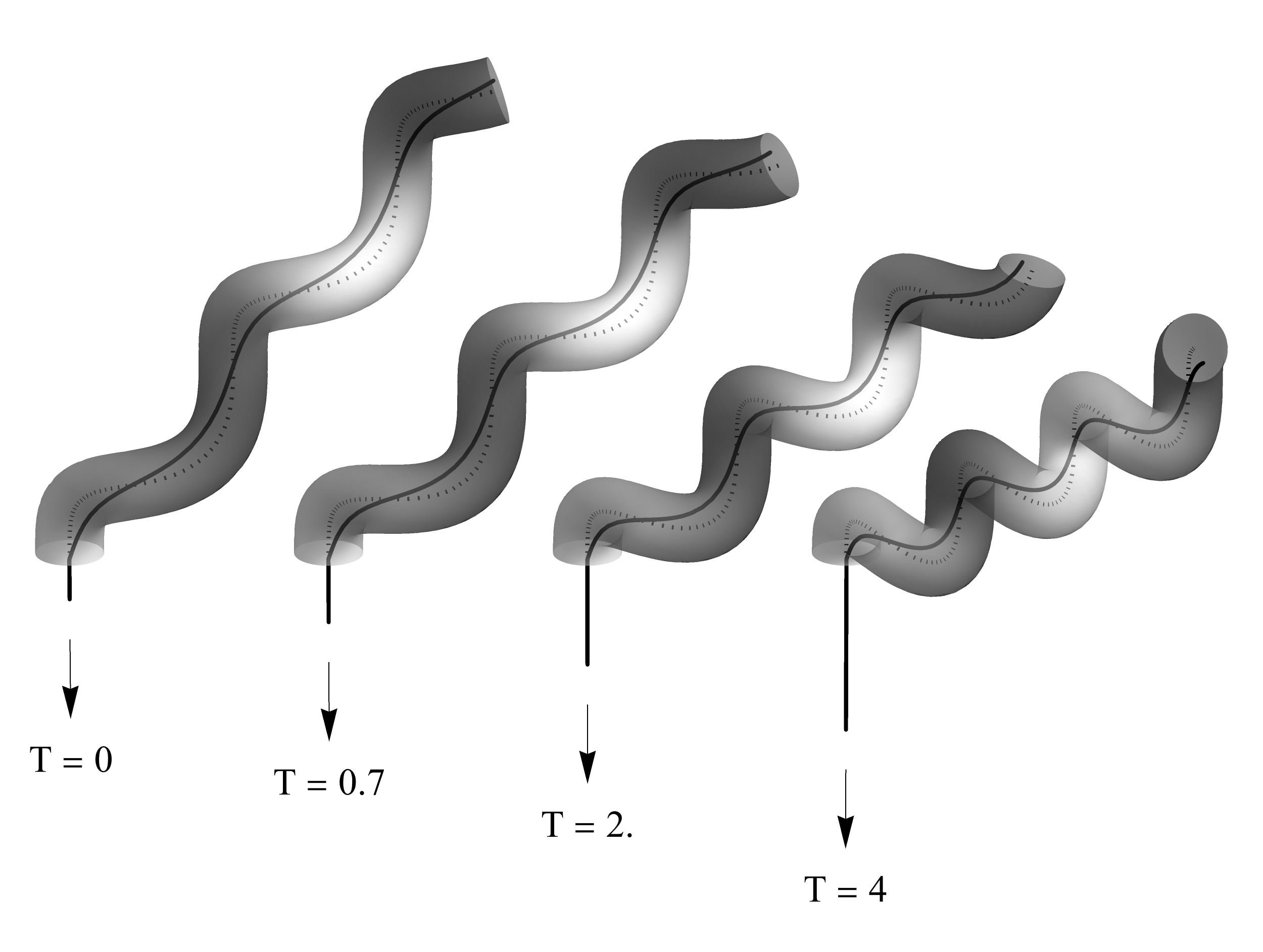}
\caption{Equilibrium configurations of a helical rod actuated by a tendon ($K_{11}=K_{22}=1,\, K_{33}=0.8,\, a=0.8,\, L = 10.$). (left) The reference configuration is a left helix with constant $\widehat{\c u}^1=0, \widehat{\c u}^2=-1,$ and $\widehat{\c u}^3=4\pi/L$: the tendon is in the extrados. (right) The reference configuration is a right helix with constant $\widehat{\c u}^1=0, \widehat{\c u}^2=1,$ and $\widehat{\c u}^3=4\pi/L$: the tendon is on the intrados. With a tendon in the extrados, the helix unwinds. With a tendon in the intrados it overwinds.}\label{fig-helical}
\end{center}
\end{figure}

All ingredients of~\eqref{hamflowinteract} are now at hand: $Q = 0$ and $\widetilde{\c t}  = \c t = - K{\widehat u}'$ from equation~\eqref{Qttendon} and the $\mathcal G$ matrix from~\eqref{Gtendon}; substituting yields
\begin{equation} \label{tendonstatics}
  \begin{cases} 
    &R' = R (\widehat {\c u} + K^{-1}\c m)^\times,\\
 & \c m' + \c u\times \c m =  l \times \c u - \frac {( (\c m +  l)\times \c u+ K{\widehat {\c u}}')\cdot (\c d_1\times Kl)} {K_{22}\,K_{33} \mathcal L_1+l\cdot Kl}\, (\c d_1 \times l).
  \end{cases}
\end{equation}

We have already established in~\eqref{hamflowso3} that on $SO(3)$ the evolution equation for the momenta is always
\[p'= p\times \c u - D_q\mathcal H,\] 
which is simpler than the second group of equations in~\eqref{tendonstatics}. However, expressing $\c u$ as a function of $p_i$ would require solving a non-linear system of equations at every integration step in $s$.

\begin{proposition}[Uniqueness of the unloaded equilibrium]\label{prop-unique}
Consider the tendon-actuated rod with one free tip and no external loads.
Then $p$ vanishes identically along the rod, so the equilibrium strain
satisfies, at every $s$,
\begin{align*}
  \c m(\c u(s)) + l(\c u(s)) = 0,
\end{align*}
and on the half-space $D = \{\lambda_2 = 1 - a\,\c u^2 > 0\}$ this
equation has exactly one solution.
\end{proposition}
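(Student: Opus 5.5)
The plan has two parts: first show that $p\equiv 0$, then show that the strain equation has exactly one solution on $D$.

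\textbf{Vanishing of $p$.} Two facts drive this step. First, with no external loads the density~\eqref{entendon} does not depend on the configuration: both $W$ and $\mathcal L_1=T\Lambda(\c u)$ are functions of $\c u$ (and possibly $s$) only. Hence $E_{q^i}\mathcal L=0$, and since $Q=0$ by~\eqref{Qttendon}, the frame change~\eqref{computeDframe} gives $D_{q^i}=E_{q^i}$ and therefore $D_{q^i}\mathcal H=0$. The momentum equation~\eqref{hamflowso3} then reduces to $p'=p\times\c u$, so that $(p\cdot p)'=0$ and $|p|$ is constant along the rod. Second, at the free tip the boundary term $[\iota_W\Theta]$ of~\eqref{varGeom} must vanish for all variations. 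Along a holonomic curve this term is $p_i\theta^i(W)$, and $\theta^i(W)$ is arbitrary there, which forces $p=0$ at that tip (this is the analogue of~\eqref{freeBC}). Constancy of $|p|$ then gives $p\equiv 0$, i.e.\ $\c m(\c u(s))+l(\c u(s))=p=E_\xi\mathcal L=0$ at every $s$.

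\textbf{Uniqueness on $D$.} At fixed $s$, the map $\c u\mapsto \c m+l$ is the gradient of $\mathcal L(\c u)=W(\c u)+T\Lambda(\c u)$. The term $W$ is strictly convex because $K$ is positive definite. The term $\Lambda$ is the Euclidean norm of the affine map $\c u\mapsto(\lambda_2,\lambda_3)$, hence convex, and it is smooth on $D$ because $\Lambda\ge\lambda_2>0$ there. Thus $\mathcal L$ is strictly convex and smooth on the convex set $D$. Its gradient is then strictly monotone: $(\nabla\mathcal L(\c u)-\nabla\mathcal L(\c v))\cdot(\c u-\c v)\ge (\c u-\c v)\cdot K(\c u-\c v)>0$ for $\c u\neq\c v$. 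Consequently the equation has at most one zero in $D$. Equivalently, the Hessian $P=K+L$ is positive definite, since $L$ is the positive semidefinite rank-one matrix of~\eqref{Lexpressed}.

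\textbf{Existence (main obstacle).} I would recast the equation as the fixed-point problem $\c u=F(\c u):=\widehat{\c u}-K^{-1}l(\c u)$. The map $F$ is continuous and bounded on $D$, because $|l|\le aT$ by~\eqref{defltendon}; Brouwer's theorem on a suitable compact convex subset then yields a solution. Alternatively, I would minimise the coercive convex function $\mathcal L$ and check that the minimiser is an interior critical point lying in $D$. The delicate part is confinement to $D$. By~\eqref{defltendon}, $l_2=-aT\lambda_2/\Lambda\le 0$ on $D$, so any solution satisfies $\c u^2=\widehat{\c u}^2+aT\lambda_2/(K_{22}\Lambda)\in[\widehat{\c u}^2,\widehat{\c u}^2+aT/K_{22}]$. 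I would first try to choose a compact convex set $B\subset\bar D$ with $F(B)\subset B$, using this sign information together with $|l|\le aT$. If that fails, I would argue by continuation in $T$ from $T=0$: the solution is $\c u=\widehat{\c u}$ there, and the implicit function theorem applies throughout because $P=K+L$ is positive definite. This step may require an implicit assumption such as $\widehat{\c u}$ lying in $D$. It is the step I expect to need the most care, since the corner $\lambda_2=\lambda_3=0$, where $\Lambda$ is not smooth, lies on $\partial D$.
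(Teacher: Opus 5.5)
Your first two steps follow the paper's proof. $D_q\mathcal H=0$ reduces the momentum equation to $p'=p\times\c u$, so $p\cdot p$ is constant and the free-tip condition gives $p\equiv0$. Uniqueness on $D$ is strict convexity of $\mathcal L=W+T\Lambda$; your strict monotonicity of $\nabla\mathcal L$ is the same fact as the paper's ``a critical point of a strictly convex function is its unique minimiser''.

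The genuine gap is existence, and it cannot be closed as the statement stands, because existence fails in general. Solve the equation explicitly. Set $\widehat\lambda_2=1-a\widehat{\c u}^2$, $\widehat\lambda_3=a\widehat{\c u}^3$ and $\alpha_i=a^2T/K_{ii}$. The first component gives $\c u^1=\widehat{\c u}^1$, and the other two read $\widehat\lambda_i=\lambda_i(1+\alpha_i/\Lambda)$ for $i=2,3$. Hence $\lambda_i=\widehat\lambda_i\Lambda/(\Lambda+\alpha_i)$, and $\Lambda>0$ must solve
\[
g(\Lambda):=\frac{\widehat\lambda_2^{\,2}}{(\Lambda+\alpha_2)^2}+\frac{\widehat\lambda_3^{\,2}}{(\Lambda+\alpha_3)^2}=1 .
\]
For $T>0$ and $\widehat\lambda\neq0$, $g$ is continuous and strictly decreasing from $g(0^+)=(\widehat\lambda_2/\alpha_2)^2+(\widehat\lambda_3/\alpha_3)^2$ to $0$. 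So a solution with $\Lambda>0$ exists if and only if
\[
T<T^*:=a^{-2}\sqrt{(K_{22}\widehat\lambda_2)^2+(K_{33}\widehat\lambda_3)^2},
\]
and that solution lies in $D$ if and only if, in addition, $\widehat\lambda_2>0$. The simplest instance is $\widehat{\c u}=0$, which gives $\c u=(0,\,aT/K_{22},\,0)$; this leaves $D$ as soon as $T\ge K_{22}/a^2$.

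This is exactly where each of your three routes breaks.
\begin{itemize}
\item \emph{Minimisation.} For $T\ge T^*$ the unique minimiser of the coercive convex $\mathcal L$ lies on the corner line $\lambda_2=\lambda_3=0$. There $0\in\partial\mathcal L$ but $\mathcal L$ is not differentiable, so you get no critical point in $D$.
\item \emph{Continuation.} The branch from $T=0$ exists only on $[0,T^*)$ and tends to the corner as $T\to T^*$. $P=K+L$ is positive definite at every point of $D$, but the branch leaves every compact subset of $D$, so the implicit function theorem does not apply ``throughout''.
\item \emph{Brouwer.} No compact convex set $B$ with $F$ continuous on $B$ and $F(B)\subset B$ can exist, since $F$ has no fixed point off the corner line.
\end{itemize}

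You rightly suspected that $\widehat{\c u}\in D$ is needed, but you missed the tension bound. Under both hypotheses, $\widehat\lambda_2>0$ and $T<T^*$, existence follows at once from the intermediate value theorem applied to $g$. This is the analytic form of the ellipse--hyperbola construction to which the paper defers its own existence claim. That construction has the same limitation: it gives an intersection in the open quadrant only while the origin lies outside the ellipse, i.e.\ for $T<T^*$. What can actually be proved is that there is at most one solution in $D$, and exactly one if and only if $\widehat\lambda_2>0$ and $T<T^*$.
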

 
\begin{proof}
The Hamiltonian does not depend explicitly on the configuration
($D_q\mathcal H = 0$), so by the Poisson bracket~\eqref{defbra} with the
$SO(3)$ structure constants of \textbf{Step 2} in
section~\ref{sec-singlerod}, $p\cdot p$ is constant along the flow; the
free-tip condition $p = 0$ then gives $\forall s: p(s)= 0$.
 
For the uniqueness, note that $D$ is convex and that
$\mathcal L = W + \mathcal L_1$ is smooth and strictly convex on $D$: $W$
is strictly convex because $K$ is positive-definite, and
$\mathcal L_1 = T\Lambda$ is convex because $\Lambda$ is the Euclidean
norm of an affine function of $\c u$. An interior critical point of a
convex function is a global minimizer, and strict convexity forbids two
minimizers~\cite[\textsection~27]{ro70}; hence $p = \nabla_{\!\c u}\mathcal L$
vanishes at no more than one point of $D$. Existence follows from the
explicit construction below.
\end{proof}
 
\begin{rem}
In the variables $(\lambda_2,\lambda_3)$ the equation $p = 0$ places the
solution at the intersection, within the quadrant delimited by the
reference values, of an ellipse centred at
$(\widehat\lambda_2,\widehat\lambda_3)$ with semi-axes $a^2T/K_{22}$ and
$a^2T/K_{33}$, and of a tension-independent hyperbola passing through the
reference and the origin. The equilibrium therefore travels a fixed
conic arc from the reference strain towards the origin as the tension
grows; we exploit this construction elsewhere. Physically, no unstable
or bifurcated branch exists to be missed: the strict convexity of the
energy density performs, pointwise, the role that a second-variation
analysis would otherwise play.
\end{rem}
\begin{rem} 
  When numerically solving equations~\eqref{tendonstatics} without external stresses, we can use the idea that $p\cdot p$ is constant to transfer the boundary condition from one end of the rod to the other. Of course for the unloaded robot ($T\neq 0$ but no other loads), we do not need to solve the differential equation at all, we can instead solve the nonlinear system $p(\c u) = 0$ and recover the shape in post-processing.
\end{rem}

\begin{rem} 
Consider the thought experiment of applying a tension $T$ but hiding the system in a sleeve. An operator acting on the rod ---without knowledge of its inner structure--- would experience a rod for which the transmitted moment is $p$. This rod would however display a nonlinear relation between $p$ and~${\c u}$. 
\end{rem}

\begin{figure}[t]
\begin{center}
\includegraphics[width=0.49\textwidth]{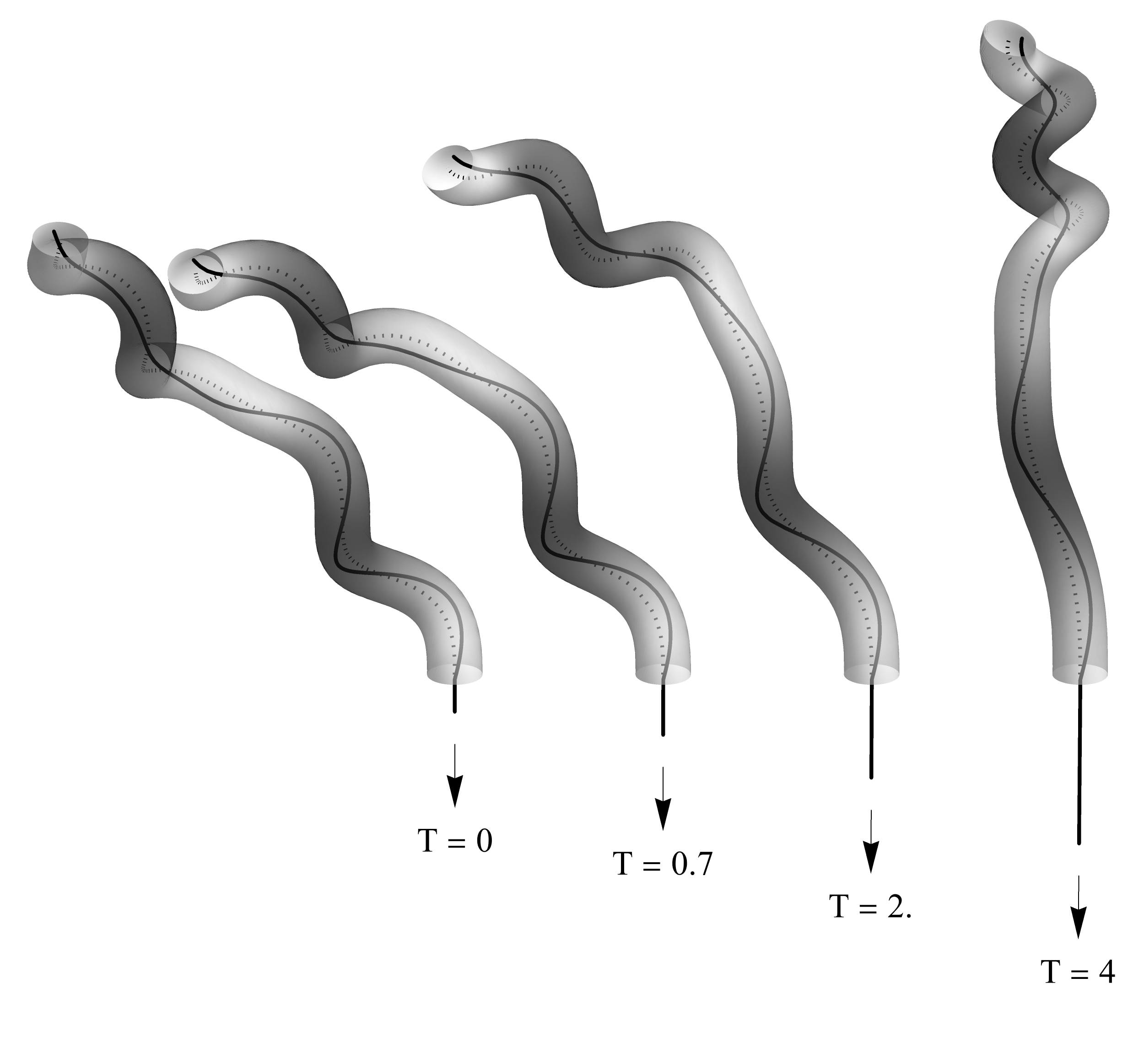}
\caption{Equilibrium configurations of a rod with a helical perversion ($\widehat{\c u}^1=0, \widehat{\c u}^2(s)=\tanh(s-L/2),$ and $\widehat{\c u}^3=6\pi/L$and actuated by a tendon ($K_{11}=K_{22}=1,\, K_{33}=0.8,\, a=0.8,\, L = 4\pi.$). The equilibria were computed by integrating~\eqref{tendonstatics} where the term with the arc length derivative of the reference configuration is now active. In keeping with the previous figure, one side of the  perversion overwinds and the other underwinds.}\label{fig-perversion}
\end{center}
\end{figure}

\section{Conclusions}
 
Casting the statics of a rod on the framed extended tangent bundle
$\overline{TQ}$ has produced the following chain. The conjugate momenta
are not chosen but forced: they are the unique functions rendering the
governing two-form nonsingular with holonomic characteristics
(Proposition~\ref{prop-pforced}), and the functional they define remains
the elastic energy on holonomic curves. For an isolated rod, the forced
momenta coincide with the material force and moment, and the Kirchhoff
equations emerge identically through three encodings of the rotation
group (section~\ref{sec-singlerod}). When an interaction couples to the
strains, the momenta decompose along the additivity of the energy, and
expressing the two factors of the Poisson bivector in different frames
lets the internal stresses carry the computation while the conjugate
momenta carry the structure (section~\ref{sec-interact}). For the
tendon-actuated rod this closes the equilibrium equations in finite
form, the required inversion collapsing to a rank-one correction, and
yields a uniqueness theorem: the unloaded robot has exactly one shape
(Proposition~\ref{prop-unique}).
 
On the way, we brought to bear two ideas often kept in separate corners
of mathematical physics: working with frames on $\overline{TQ}$, and
working with Lepage-equivalent forms. The second was developed in its
literature into a much more general tool than what is needed for rods;
Proposition~\ref{prop-pforced} gathers what is needed here and is
accompanied by a tailored proof.
 
The construction also clarifies what the two sets of variables are. The
internal stresses $(\c n,\c m)$ are subsystem quantities, defined by the
elastic energy of the rod alone; the momenta $p$ are system quantities,
selected by the variational structure of the whole problem and equal to
the total force and moment transmitted across a section. An operator
holding a tendon-actuated rod hidden in a sleeve experiences a single
effective rod whose constitutive relation between $p$ and the strain is
nonlinear; the formalism computes that effective law, and then treats
the composite as a rod in its own right. We expect this coarse-graining
to be the natural language for compound filaments.
 
Two companion directions are developed separately. Rigid constraints and
their limits are best treated on the extended cotangent bundle, where
the singular limits of the stiffnesses become regular. Stability
requires the second variation of the energy; the present construction
was arranged so that the functional \emph{is} the energy on holonomic
curves, precisely so that equilibria can later be sought as minimizers
rather than mere critical curves. Finally, the tendon example invites a
systematic treatment of actuated structures --- multiple rods, multiple
tendons, stiffnesses and offsets varying along the length, friction
between tendon and channel --- for which the equations of
section~\ref{sec-interact} provide the conservative backbone.

\appendix

\section{Variation of the integral of a form under a flow \label{app-varform}}

Let $\mathcal M$ be an $n$-dimensional manifold, $W$ a smooth vector field on $\mathcal M$, $\phi$ the flow associated with $W$, $\mathcal S\subset \mathcal M$ be a $k$-dimensional submanifold with $k<n$, and $\omega$ a $k$-form on $\mathcal M$. 

Then we let $W$ carry $\mathcal S$ along its flow. That is for all $\varepsilon$ sufficiently small  we define 
$$\mathcal S_\varepsilon = \{ \phi_\varepsilon(s) : s\in\mathcal S\},$$
where $\phi_0$ is the identity operator.

The variation of the integral 
\begin{align}
I = \int_{\mathcal S} \omega,\label{defI}
\end{align}
with respect to the vector field $W$ is then defined as 
\begin{align}
\delta I[W] := \left.\left( \sd{} \varepsilon \int_{\mathcal S_\varepsilon} \omega\right) \right |_{\varepsilon= 0}.
\end{align}

Define $\mathcal V = \bigcup_{\sigma\in[0,\varepsilon]} \mathcal S_\sigma.$ If $\mathcal S$ has a boundary $\partial \mathcal S$, then the boundary of $\mathcal V$ is $\partial \mathcal V=\mathcal S_\varepsilon \cup \mathcal B \cup \mathcal S_0$ where $\mathcal B:= \bigcup_{\sigma\in[0,\varepsilon]} \partial \mathcal S_\sigma.$ Then by Stokes and Fubini, we have the following three results : 
\begin{align}
\int_{\mathcal V} d\omega &= \int_{\mathcal S_\varepsilon -\mathcal S_0 -\mathcal B}\omega,&
 \int_{\mathcal V} d\omega 
&= \int_0^\varepsilon \left(\int_{\mathcal S_\sigma}\iota_W( d\omega)\right)\, d\sigma,&
\int_{\mathcal B} \omega & = \int_0^\varepsilon \left ( \int_{\partial S_\sigma} \iota_W(\omega)\right) \, d\sigma.
\end{align}

Reorganising the first equality, then substituting the other two therein yields
\begin{align*}
\int_{\mathcal S_\varepsilon} \omega& = \int_{\mathcal V} d\omega + \int_{\mathcal S_0} \omega + \int_{\mathcal B} \omega,\\
&= \int_0^\varepsilon \left ( \int_{\mathcal S_\sigma} \iota_W(d\omega) \right ) d\sigma + \int_{\mathcal S_0} \omega + \int_0^\varepsilon \left ( \int_{\partial \mathcal S_\sigma} \iota_W(\omega)\right )d\sigma,
\end{align*}
so that 
\begin{align}
\delta I [W] &= \int_\mathcal S \iota_W(d\omega) + \int_{\partial S} \iota_W(\omega) \label{delI2terms}\\
&= \int_\mathcal S \iota_W(d\omega)+d( \iota_W(\omega) ) \\ 
& = \int_\mathcal S L_W (\omega),\label{delILie}
\end{align}
where $L_W(\omega)$ is the Lie derivative of $\omega$ with respect to the vector field $W$.

Both expressions~\eqref{delI2terms} and~\eqref{delILie} of the first variation of $I$ are useful in practice. The former separate bulk and boundary terms while the latter gives a particularly compact expression where we can use many technical rules of calculus with Lie derivatives.

Naturally, $\delta I[W]$ is itself an integral of a $k$-form so we can consider 
\begin{align}
\delta (\delta I[W]) [U] &= \int_{\mathcal S} L_U(L_W(\omega))\\
&=:\delta^2I[W,U],
\end{align}
where the last equality defines the notation $\delta^2$.

\section{Nonsingular two-forms in odd dimensional spaces and vortex lines\label{app-vortex}}
This is a summary of the beginning of section 44 of \cite{ar00}. Let $M$ be an odd-dimensional manifold and $\omega^2$, a real-valued two-form defined on $M$. 
\begin{lem}\label{lem-nullvectexist}
For all $p\in Q$, there is a non-vanishing vector $\xi\in T_pM$ such that $$\omega^2(\xi,\eta) = 0, \qquad \forall \eta \in T_p M.$$
\end{lem}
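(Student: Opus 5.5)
The plan is to reduce the statement to a purely linear-algebraic fact at a single point. I fix $p\in M$ and set $V=T_pM$, a real vector space of odd dimension $m=2k+1$. The two-form $\omega^2$ evaluated at $p$ is then a bilinear antisymmetric map $V\times V\to\mathbb R$. After choosing any basis $(e_1,\dots,e_m)$ of $V$, it is represented by the $m\times m$ matrix $\Omega_{ij}=\omega^2(e_i,e_j)$. Antisymmetry of the form gives $\Omega^T=-\Omega$. In this language the sought vector $\xi=\xi^ie_i$ satisfies $\omega^2(\xi,\eta)=0$ for all $\eta$ exactly when $\xi^i\Omega_{ij}=0$ for every $j$, that is, when $\Omega^T\xi=0$, or equivalently $\Omega\,\xi=0$.

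The key step is the determinant argument. From $\Omega^T=-\Omega$,
\[
\det\Omega=\det\Omega^T=\det(-\Omega)=(-1)^m\det\Omega=-\det\Omega,
\]
because $m$ is odd. Hence $\det\Omega=0$, so $\Omega$ is singular and its kernel is nontrivial. Any nonzero vector in $\ker\Omega$ is the required $\xi\in T_pM$, and it satisfies $\omega^2(\xi,\eta)=0$ for all $\eta\in T_pM$. Since $p$ was arbitrary, this proves the lemma. (The statement writes $p\in Q$; I read it as $p\in M$.)

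Nothing in the argument is a real obstacle. The one point to handle carefully is to phrase the result in a basis-independent way, so that the later use in appendix~\ref{app-vortex} is clear. For that I will note that $\ker\Omega$ is the kernel of the linear map $V\to V^\star$, $\xi\mapsto\iota_\xi\omega^2$, so it does not depend on the basis chosen. The same determinant computation also shows that this kernel has odd dimension: the rank of an antisymmetric matrix is even, so $\dim\ker\Omega=m-\mathrm{rank}\,\Omega$ is odd. This is what makes the later definition of ``nonsingular'' meaningful, namely that the kernel is exactly one-dimensional. I may add this remark, using the standard normal form of antisymmetric bilinear forms, but it is not needed for the lemma itself.
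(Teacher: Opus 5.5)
Your proof is correct and takes the same approach as the paper: choose a basis of $T_pM$ and observe that the antisymmetric matrix of $\omega^2$, being of odd size, has vanishing determinant and hence a nontrivial kernel (the paper only sketches this and defers to Arnold). Your reading of $p\in Q$ as $p\in M$ is the right one, since that is a typo in the statement.
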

A proof is easy to build upon defining a basis of $T_p M$ and showing that the determinant of the matrix associated with $\omega^2$ must vanish; see~\cite{ar00}.

A vector $\xi$ as defined in Lemma~\ref{lem-nullvectexist} is called a \emph{null vector} of $\omega^2$. Clearly the set of null vectors of $\omega^2$ form a vector subspace of $T_p M$. 
\begin{defi}The form $\omega^2$ is \emph{nonsingular} iff its subspace of nullvectors has dimension 1 for all $p\in Q$.
\end{defi}

Let $\omega^1$ be a 1-form on $M$ such that $d\omega^1$ is nonsingular. Next we construct a non-vanishing vector field $X$ of nullvectors of $d\omega^1$.
For every $p\in Q$, let $X(p) \in T_pM$ be a non-zero nullvector of $d\omega^1$. $X(p)$ is called the vortex direction of $\omega^1$ at $p$. The integral curves of a field $X$ of vortex directions are called the vortex lines (or characteristic lines) of the 1-form $\omega^1$.

\subsection*{Vortex lines are critical curves for variational problems} 
Let $M$ be an odd-dimensional manifold, $\omega$ a 1-form on $M$ such that $d\omega$ is non-singular,  and $p_1,\, p_2\in Q$. Consider the problem of finding a curve $\gamma$ joining $\mathcal S_1\subset M$ to $\mathcal S_2\subset M$ such that 
\be{varOform}
{\mathcal E}= \int_\gamma \omega
\ee be minimal.

A vector field $W$ is called an admissible perturbation $\gamma$ if it is continuous in an open neighbourhood of $\gamma$ and if it is tangent to the boundary conditions at both ends of the curve.

We first ask that $\gamma$ be critical, that is for all admissible vector fields $W$ we have
\begin{align*}
0&=\delta \mathcal E [W]  \stackrel{\eqref{delI2terms}}{=}  \int_\gamma \iota_W(d\omega) + \int_{\partial \gamma} \iota_W(\omega) \\
& = \int_a^b d\omega(W,\gamma')|_{\gamma(s)}\,  ds + \omega(W)|_{\gamma(b)} - \omega(W)|_{\gamma(a)}. 
\end{align*}

Because the last expression must vanish for all admissible $W$, we deduce that $\gamma'$ must be along a vortex line of $d\omega$ and that $\gamma(b)$ and $\gamma(a)$ are points of $\mathcal S$ where $\omega$ is normal to the boundary condition -- we say that a covector is \emph{normal} to a surface if it vanishes on all vectors that are tangent to the surface.

\section{Framing the tangent bundle\label{app-framing}}

The aim of this appendix is to start with a framing or a coframing of a manifold $M$ and deduce a particular framing and coframing of its extended tangent bundle.

We first need to remind the definitions of submersion of a manifolds, vertical subspace (with respect to a submersion) and smooth framings as these concepts will be central to the following discussion. A full account can be found in various textbooks in differential geometry; for instance~\cite{Lee12,st12}

Given two differentiable manifolds $A$ and $B$ a submersion $\pi:A\to B$ is a smooth mapping the pushforward of which is surjective, that is $\forall a\in A: ~~d\pi|_a:T_aA\to T_{\pi(a)}B$ is surjective.

Given such a submersion, the subspace of vertical vectors at $a\in A$ is noted $\textrm{Vert}_\pi a$ and defined as the vector subspace to $T_aA$ that is pushed forward to $0$ by $d\pi|_a$. That is 
$$\textrm{Vert}_\pi a = \{X\in T_a A: d\pi( X) = 0\in T_{\pi(a)}B\}.$$
Notably, the projection alone does not define a horizontal space (see~\cite{st12}). But we will not be needing horizontal spaces here. In particular we will not be defining connections on our vector bundles.

\subsection{Framing a manifold}
A (smooth) framing (respectively coframing) of a manifold $Q$ of dimension $n$ is a $n$-tuple of (smooth) vector fields (respectively 1-forms) on $Q$ that form a basis of $T_qQ$ (respectively $T^\star_qQ$) at every point $q\in Q$. Note that many manifolds do not admit global framings; the 2-sphere is a notorious example. Note also that all Lie groups can be globally framed because any basis of the tangent space at the identity can be extended as left-invariant vector fields over the whole group.

A familiar coframing of an $n$ dimensional open subset $U\subset Q$ is given by a set of coordinates which are $n$ functions $x^i:U\to\mathbb R^n$. Indeed their differentials $dx^i$ are expected to be linearly independent 1-forms everywhere in $U$. The dual frame is then simply formed by the partial derivatives $\pd{}{x^i}$. Note that frames defined from coordinates systems are always so that the constituting vector fields commute since partial derivatives of smooth functions commute:
\begin{align}
\forall\varphi \in\FQ: [\partial_i,\partial_j] \varphi := \pddd{\varphi}{x^i}{x^j} -  \pddd{\varphi}{x^j}{x^i}  = 0,
\end{align}
where $\FQ$ denotes the set of smooth functions on $Q$.

This is not so for general frames for which the commutators of elements is again a vector fields on $Q$. That commutator can therefore be expressed as linear combination of the frame's elements $(E_1,\cdots, E_n)$:
\begin{align*}
\forall\varphi \in\FQ: [E_i,\,E_j] \varphi := E_i(E_j(\varphi)) - E_j(E_i(\varphi)) := c^k_{ij} E_k(\varphi),
\end{align*}
where the $c^k_{ij}$ are functions on $M$ that do not depend on $\varphi$. They are the structure functions of the frame. If we note $(\theta^1,\cdots, \theta^n)$ the dual frame, we have by definition
\begin{align}
\label{defstructure}
c^k_{ij} = \theta^k([E_i,E_j]).
\end{align}
It follows (see p.311\,\cite{Lee12}) that,
\begin{align}
d\theta^i= -\frac{c^i_{jk}}{2} \,\theta^j\wedge\theta^k =-c^i_{jk}\,\theta^j\otimes \theta^k.
\end{align}

For Lie groups framed by left-invariant vector fields, the structure functions $c^k_{ij}$ are constants over the whole space.

It is worth also noting that whenever one is equipped with a smooth frame (resp. coframe) of a finite dimensional manifold, one can always obtain a smooth coframe (resp. frame) by considering the dual frame at every point. 

\subsection{Framing the (extended) tangent bundle}

Given a local coframing $\beta^1,\cdots, \beta^n$ of $Q$ defined $\forall q \in Q$, our aim is to build a frame and a coframe for $\overline{TQ}$. First, define the dual frame $B_1,\cdots, B_n$ to that of the $\beta s$. That is $B_j$ is a vector field on $Q$ such that everywhere on $Q$, we have $\forall i : \beta^i(B_j) = \delta^i_j.$

 The space $\overline{TQ}$ forms a vector bundle
 of rank $n$ over $\overline Q$ with the associated projection $\pi:\overline{TQ}\mapsto\overline Q:(q,X,s)\mapsto (q,s)$. A fibre $\pi^{-1}(q,s)$ of $\overline{TQ}$ over $(q,s)$ is isomorphic to $T_q Q$ the tangent space to $Q$ at $q$. Next we define $n$ functions over $\overline {TQ}$: 
 $$\xi^i:\overline{TQ} \to \mathbb R: (q,X,s) \mapsto \beta^i|_q(X).$$
 These functions are continuous on $\overline{TQ}$ because the $\beta^i$ are continuous 1-forms. The restrictions of each of these $n$ functions to the fibre $\pi^{-1}(q,s)$ over any point of $\overline{Q}$ provide a system of coordinates over the fibre since for any $X\in T_q Q$, we have $X = \beta^i(X) B_i = \xi^i(X) \, B_i.$ 
 
Next, let $\theta^i=\pi^\star \,\beta^i$ be the pull-backs of the $\beta$ by the projection $\pi$. Then $$\theta^1,\cdots,\theta^n,d{\xi^1},\cdots,d{\xi^n},\,ds$$ form a coframing of $\overline{TQ}$. That is, these $2n+1$ continuous 1-forms provide a basis for the cotangent space to every point of $\overline{TQ}$. We can then define the dual basis of tangent vectors $E_{q^1},\cdots, E_{q^n},E_{\xi^1},\cdots,E_{\xi^n},E_s$. They provide $2n+1$ vector fields over $\overline{TQ}$ that constitute a framing.

Also note that with the further projection to the first factor $\pi_s : \overline{Q}= Q\times [a,b] \to Q:(q,s)\mapsto Q$, we have by construction, $\pi_{s\star} \circ \pi_\star E_{q^i} = B_i$.

To compute $d\theta^i$, we use (see p.310 in~\cite{Lee12})
\be{appdtheta}
d\theta^i (X,W) =X(\theta^i(W)) - W(\theta^i(X)) - \theta^i([X,W])
\ee
To compute $\theta^i([X,W])$, we consider first that $\pi_\star E_{q^i} = B_i\in \mathfrak X(\bar M)$,  $\pi_\star E_{\xi^j} = 0 \in\mathfrak X(\bar M)$ (see the p. 10 of \cite{kono63}) and then that 
\ben{appEcomm}
\theta^i ([E_{\xi^j},E_{\xi^k}])& =& (\pi^\star \beta^i) ([E_{\xi^j},E_{\xi^k}]) = \pi^\star \big ( \beta^i  (\pi_\star [ E_{\xi^j},E_{\xi^k} ]) \big) = \pi^\star (\beta^i([\pi_\star E_{\xi^j},\pi_\star E_{\xi^k}])) = 0,\nonumber \\
\theta^i ([E_{\xi^j},E_{q^k}]) &= & \pi^\star (\beta^i([\pi_\star E_{\xi^j},\pi_\star E_{q^k}])) = 0,\nonumber \\ 
\theta^i ([E_{q^j},E_{q^k}]) &= & \pi^\star (\beta^i([\pi_\star E_{q^j},\pi_\star E_{q^k}])) =\pi^\star (\beta^i([B_j,B_k]) ) = c^i_{jk},\nonumber \\ 
\theta^i ([E_{q^j},E_{s}]) &= & \pi^\star (\beta^i([B_j,B_s])) =  \pi^\star (\pi_s^\star  \beta^i) ([B_j,B_s])) =   \pi^\star \pi_s^\star  \beta^i ([ B_j,0])=0. \nonumber 
\end{eqnarray}
where $c^i_{jk}$ are the structure functions of the basis $\beta^i$.
Finally, we compute
\ben{appDcomm}
\theta^i ([D_p^j,D_p^k])& =& \theta^i \Big(\left [(P^{-1})^{ja} E_{\xi^a}, (P^{-1})^{kb} E_{\xi^b}\right]\Big) =0,\nonumber \\
\theta^i ([D_{p}^j,D_{q^k}]) &= &  \theta^i \Big(\left [(P^{-1})^{ja} E_{\xi^a}, E_{qk} - (CP^{-1})^{b}_k E_{\xi^b}\right]\Big) =0,\nonumber\\
\theta^i ([D_{q^j},D_{q^k}]) &= &  \theta^i \Big(\left [E_{q^j} - (CP^{-1})^{a}_j E_{\xi^a}, E_{qk} - (CP^{-1})^{b}_k E_{\xi^b}\right]\Big) =c^i_{jk},\nonumber\\
\theta^i ([D_{q^j},D_{s}]) &=&0. \nonumber 
\end{eqnarray}

Gathering, we have 
\begin{eqnarray}
\theta^k([X,W]) &=& \theta^k\left(\left [X,
W_q^a \, D_{qa} + W_{pb}\, D_{p}^b + W_s \, D_{s}\right]\right) \nonumber \\
&=&\theta^k\left(  W_q^a \left [X,D_{qa} \right] + X(W_{q}^a) D_{qa} + W_{pb} \left [X,D_{p}^j\right] + X(W_{pb})\, D_p^b + W_s \left[X,D_s\right] + X(W_s)\, D_s\right)\nonumber \\
&=& W_q^a \, \theta^k\left ( \left [ X_q^i \, D_{q^i} + X_{pj}\, D_{p}^j + X_s \, D_{s},D_{qa}\right]\right) +X(W_q^k) \nonumber \\
&&\quad+ W_{pb} \theta^k\left (\left [ X_q^i \, D_{q^i} + X_{pj}\, D_{p}^j + X_s \, D_{s},D_{pb}\right]\right) + W_s \theta^k\left (\left [ X_q^i \, D_{q^i} + X_{pj}\, D_{p}^j + X_s \, D_{s},D_{s}\right]\right)\nonumber \\
&=&W_q^a X_q^i c^k_{ia} -W_q^a D_{qa} (X_q^k) + X(W_q^k) - W_{pb} D_{pb}(X_q^k) -W_s D_s(X_q^k),\nonumber \\
&=&W_q^a X_q^i c^k_{ia} + X(W_q^k) - W(X_q^k). \label{thetabracket}
\end{eqnarray}

Substituting\re{thetabracket} in\re{appdtheta}, we find  (in agreement with p.311\,\cite{Lee12})
\be{appdthetafin}
d\theta^i(X,W) = - c^i_{jk} X_q^j\, W_q^k.
\ee


\begin{thebibliography}{10}

\bibitem{dilima95}
D.~J. Dichmann, Y.~Li, and J.~H. Maddocks.
\newblock {Hamiltonian formulations and symmetries in rod mechanics}.
\newblock {\em IMA Vol. Math. Appl.}, 82:71--114, 1996.

\bibitem{kema97b}
S.~Kehrbaum and JH~Maddocks.
\newblock Elastic rods, rigid bodies, quaternions and the last quadrature.
\newblock {\em Philos. Trans. Roy. Soc. A}, 355(1732):2117--2136, 1997.

\bibitem{va01}
G.~H.~M. Van~der Heijden.
\newblock The static deformation of a twisted elastic rod constrained to lie on a cylinder.
\newblock {\em Proc. Roy. Soc. Lond. A Mat.}, 457(2007):695--715, 2001.

\bibitem{sthe14}
E.L. Starostin and G.H.M. {van der Heijden}.
\newblock Theory of equilibria of elastic 2-braids with interstrand interaction.
\newblock {\em Journal of the Mechanics and Physics of Solids}, 64:83--132, 2014.

\bibitem{eu44}
L.~Euler.
\newblock {\em {Methodus inveniendi lineas curvas maximi minimive proprietate gaudentes sive solutio problematis isoperimetrici latissimo sensu accepti}}.
\newblock apud Marcum-Michaelem Bousquet \& socios, 1744.

\bibitem{ki59}
G.~Kirchhoff.
\newblock \"{U}ber das gleichgewicht und die bewegung eines unendlich d\"{u}nnen elastischen stabes.
\newblock {\em J. Reine Angew. Math.}, 56:285--313, 1859.

\bibitem{Cosserat1909}
E.~Cosserat and F.~Cosserat.
\newblock {\em Th{\'{e}}ories des Corps D{\'{e}}formables}.
\newblock Hermann, Paris, 1909.

\bibitem{an05}
Stuart~S. Antman.
\newblock {\em {Nonlinear problems of elasticity}}, volume 107 of {\em Applied Mathematical Sciences}.
\newblock Springer New York, New York, 2 edition, 2005.

\bibitem{neva02}
S.~Neukirch and G.~H.~M. van~der Heijden.
\newblock Geometry and mechanics of uniform n-plies: from engineering ropes to biological filaments.
\newblock {\em J. Elasticity}, 69:41--72, 2002.

\bibitem{moma05}
Maher Moakher and John~H Maddocks.
\newblock A double-strand elastic rod theory.
\newblock {\em Arch. Ration. Mech. Anal.}, 177(1):53--91, 2005.

\bibitem{lemogo14}
Th. Lessinnes, D.~E. Moulton, and A.~Goriely.
\newblock Morphoelastic rods. {Part II}: {G}rowing birods.
\newblock {\em J. Mech. Phys. Solids}, 100:147--196, 2017.

\bibitem{moulton2020morphoelastic}
Derek~E Moulton, Thomas Lessinnes, and Alain Goriely.
\newblock {Morphoelastic rods III: Differential growth and curvature generation in elastic filaments}.
\newblock {\em J. Mech. Phys. Solids}, 142:104022, 2020.

\bibitem{vachth02}
G.~H.~M. Van~der Heijden, A.~R. Champneys, and J.~M.~T. Thompson.
\newblock Spatially complex localisation in twisted elastic rods constrained to a cylinder.
\newblock {\em Int. J. Solids and Struct.}, 39(7):1863--1883, 2002.

\bibitem{ch03}
N.~Chouaieb.
\newblock {\em Kirchhoff's problem of helical solutions of uniform rods and their stability properties}.
\newblock PhD thesis, {\'E}cole Polytechnique F{\'E}d{\'E}rale de Lausanne, 2003.

\bibitem{poincare01}
Henri Poincar{\'e}.
\newblock Sur une forme nouvelle des {\'e}quations de la {M}{\'e}canique.
\newblock {\em Comptes rendus hebdomadaires des s{\'e}ances de l'Acad{\'e}mie des sciences}, 132:369--371, 1901.
\newblock S{\'e}ance du 18 f{\'e}vrier 1901.

\bibitem{ki62}
A.~A. Kirillov.
\newblock Unitary representations of nilpotent {L}ie groups.
\newblock {\em Russian Mathematical Surveys}, 17(4):53--104, 1962.

\bibitem{ar66}
V.~I. Arnold.
\newblock Sur la g\'eom\'etrie diff\'erentielle des groupes de {L}ie de dimension infinie et ses applications \`a l'hydrodynamique des fluides parfaits.
\newblock {\em Annales de l'Institut Fourier}, 16(1):319--361, 1966.

\bibitem{ko70b}
Bertram Kostant.
\newblock Quantization and unitary representations.
\newblock In {\em Lectures in Modern Analysis and Applications {III}}, volume 170 of {\em Lecture Notes in Mathematics}, pages 87--208. Springer, Berlin, 1970.

\bibitem{so70}
J.-M. Souriau.
\newblock {\em Structure des syst\`emes dynamiques}.
\newblock Dunod, Paris, 1970.
\newblock English translation: {\emph{Structure of Dynamical Systems: A Symplectic View of Physics}}, Birkh\"auser, 1997.

\bibitem{ki04}
A.~A. Kirillov.
\newblock {\em Lectures on the Orbit Method}, volume~64 of {\em Graduate Studies in Mathematics}.
\newblock American Mathematical Society, 2004.

\bibitem{maje99}
Jerrold~E. Marsden and Tudor~S. Ratiu.
\newblock {\em Introduction to Mechanics and Symmetry}, volume~17 of {\em Texts in Applied Mathematics}.
\newblock Springer, New York, 2 edition, 1999.

\bibitem{cema01}
Hern\'an Cendra, Jerrold~E. Marsden, and Tudor~S. Ratiu.
\newblock Lagrangian reduction by stages.
\newblock {\em Memoirs of the American Mathematical Society}, 152(722):x+108, 2001.

\bibitem{cema03}
Hern{\'a}n Cendra, Jerrold~E. Marsden, Sergey Pekarsky, and Tudor~S. Ratiu.
\newblock Variational principles for {L}ie--{P}oisson and {H}amilton--{P}oincar{\'e} equations.
\newblock {\em Moscow Mathematical Journal}, 3(3):833--867, 2003.

\bibitem{homa98}
Darryl~D. Holm, Jerrold~E. Marsden, and Tudor~S. Ratiu.
\newblock The {E}uler--{P}oincar\'e equations and semidirect products with applications to continuum theories.
\newblock volume 137, pages 1--81. 1998.

\bibitem{blma09}
Anthony~M. Bloch, Jerrold~E. Marsden, and Dmitry~V. Zenkov.
\newblock Quasivelocities and symmetries in nonholonomic systems.
\newblock {\em Dyn. Syst.}, 24(2):187--222, 2009.

\bibitem{ca22}
E.~Cartan.
\newblock {\em Le\c{c}ons sur les invariants int{\'e}graux}.
\newblock Hermann, Paris, 1922.

\bibitem{le36}
T.~Lepage.
\newblock Sur les champs g\'eod\'esiques des int\'egrales multiples.
\newblock {\em Bull. Acad. Roy. Belg., Cl. Sci.}, 27:27--46, 1936.

\bibitem{kr73}
D.~Krupka.
\newblock Some geometric aspects of variational problems in fibered manifolds.
\newblock {\em Folia Fac. Sci. Nat. Univ. Purkyn. Brun., Physica}, 14:14, 1973.
\newblock Also available at arXiv:math-ph/0110005.

\bibitem{kr15}
D.~Krupka.
\newblock {\em Introduction to Global Variational Geometry}.
\newblock Atlantis Press, Paris, 2015.

\bibitem{goriely17}
Alain Goriely.
\newblock {\em {The mathematics and mechanics of biological growth}}, volume~45 of {\em Interdisciplinary applied mathematics}.
\newblock Springer-Verlag, New York, 1 edition, 2017.

\bibitem{pila13}
A.~Pirrera, X.~Lachenal, S.~Daynes, P.~M. Weaver, and I.~V. Chenchiah.
\newblock Multi-stable cylindrical lattices.
\newblock {\em J. Mech. Phys. Solids}, (61):2087--2107, 2013.

\bibitem{ar00}
V.~I. Arnold.
\newblock {\em Mathematical Methods of Classical Mechanics}.
\newblock Springer, 2000.

\bibitem{mara99}
Jerrold~E. Marsden and Tudor~S. Ratiu.
\newblock {\em Introduction to Mechanics and Symmetry}.
\newblock Springer, 2nd edition, 1999.

\bibitem{gri83}
Phillip~A. Griffiths.
\newblock {\em Exterior Differential Systems and the Calculus of Variations}.
\newblock Birkhauser, 1983.

\bibitem{Lee12}
John~M. Lee.
\newblock {\em Introduction to Smooth Manifolds}, volume 218 of {\em Graduate Texts in Mathematics}.
\newblock Springer, New York, 2nd edition, 2012.

\bibitem{hamel04}
Georg Hamel.
\newblock Die {L}agrange-{E}ulerschen {G}leichungen der {M}echanik.
\newblock {\em Zeitschrift f{\"u}r Mathematik und Physik}, 50:1--57, 1904.

\bibitem{lima87}
Paulette Libermann and Charles-Michel Marle.
\newblock {\em Symplectic Geometry and Analytical Mechanics}, volume~35 of {\em Mathematics and its Applications}.
\newblock D. Reidel Publishing Company, Dordrecht, 1987.

\bibitem{al86}
Simon~L. Altmann.
\newblock {\em Rotations, Quaternions, and Double Groups}.
\newblock Oxford University Press, Oxford, 1986.
\newblock Reprinted by Dover Publications, Mineola, NY, 2005.

\bibitem{bole23}
Fr{\'e}d{\'e}ric Boyer, Vincent Lebastard, Fabien Candelier, Federico Renda, and Mazen Alamir.
\newblock Statics and dynamics of continuum robots based on cosserat rods and optimal control theories.
\newblock {\em IEEE transactions on robotics}, 39(2):1544--1562, 2023.

\bibitem{ruwe14}
D~Caleb Rucker and Robert~J Webster~III.
\newblock Mechanics of continuum robots with external loading and general tendon routing.
\newblock In {\em Experimental Robotics: The 12th International Symposium on Experimental Robotics}, pages 645--654. Springer, 2014.

\bibitem{ro70}
R.~Tyrrell Rockafellar.
\newblock {\em Convex Analysis}.
\newblock Princeton University Press, Princeton, NJ, 1970.

\bibitem{st12}
Shlomo Sternberg.
\newblock {\em Curvature in Mathematics and Physics}.
\newblock Dover Publications, Mineola, NY, 2012.

\bibitem{kono63}
Shoshichi Kobayashi and Katsumi Nomizu.
\newblock {\em Foundations of Differential Geometry}, volume~I of {\em Interscience Tracts in Pure and Applied Mathematics}.
\newblock Interscience Publishers, John Wiley \& Sons, New York, 1963.
\newblock Reprinted in Wiley Classics Library, 1996.

\end{thebibliography}
\end{document}